\theoremstyle{plain}
\newtheorem{theorem}{Theorem}
\newtheorem{lemma}[theorem]{Lemma}
\newtheorem{corollary}[theorem]{Corollary}
\newtheorem{question}{Question}
\theoremstyle{definition}
\newtheorem{example}[theorem]{Example}
\newtheorem{definition}[theorem]{Definition}
\newtheorem{construction}[theorem]{Construction}
\newcommand{\defn}[1]{\emph{#1}}
\newcommand{\CBC}{{\rm\textnormal{-}ECBC}}
\newcommand{\dash}{{\rm\textnormal{-}}}
\title{On Erasure Combinatorial Batch Codes}
\author{
JiYoon Jung\thanks{Marshall University, 1 John Marshall Drive, Huntington, WV USA, 25755} \and
Carl Mummert\footnotemark[1] \and
Elizabeth Niese\footnotemark[1] \and
Michael W. Schroeder\footnotemark[1]~\thanks{Corresponding author: {\tt schroederm@marshall.edu}}}
\date{\vspace{-5ex}}
\begin{document}
\maketitle

\begin{abstract}
Combinatorial batch codes were defined by Paterson, Stinson, and Wei as purely combinatorial versions of the batch codes introduced by Ishai, Kushilevitz, Ostrovsky, and Sahai. 
There are $n$ items and $m$ servers, each of which stores a subset of the items. 
A batch code is an arrangement for storing items on servers so that, for prescribed integers $k$ and $t$, any $k$ items can be retrieved by reading at most $t$ items from each server. 
Silberstein defined an erasure batch code (with redundancy $r$) as a batch code in which any $k$ items can be retrieved by reading at most $t$ items from each server, while any $r$ servers are unavailable (failed).

In this paper, we investigate erasure batch codes with $t=1$ (each server can read at most one item) in a combinatorial manner.
We determine the optimal (minimum) total storage of an erasure batch code for several ranges of parameters.
Additionally, we relate optimal erasure batch codes to maximum packings.
We also identify a necessary lower bound for the total storage of an erasure batch code, and we relate parameters for which this trivial lower bound is achieved to the existence of graphs with appropriate girth.

\medskip
\noindent
{\bf\begin{tabular}{@{}l@{}}
AMS Classification: \quad 05B05, 05B30 \\
Keywords: \quad \begin{tabular}[t]{@{}l@{}} batch codes, dual codes, erasure batch codes, \\ block designs, dual designs\end{tabular}
\end{tabular}}

\end{abstract}

\section{Introduction}

We study a class of combinatorial objects called \emph{erasure combinatorial batch codes}. 
These are motivated by a data retrieval problem in which a collection of items (such as files) are stored, with possible  duplication, on a collection of servers. After the items are stored, a demand will be made for at most $k$ distinct items, where $k$ is fixed in advance. 
The goal is to store as few total copies of the items as possible, while still being able to retrieve any $q$-subset ($q\leq k$) of the items without taking too many from any one server. 
The study of such problems was initiated by Ishai, Kushilevitz, Ostrovsky, and Sahai~\cite{ishai}, who proposed a class of \emph{batch codes} that provide solutions to the following particular data retrieval problem. 

\begin{question}[Ishai~et.~al.~\cite{ishai}]\label{q1}
Suppose a collection of $n$ items is to be stored over a set of $m$ servers. Can the items be stored so that any $k$ items are simultaneously accessible by taking at most $t$ items from each server? If so, what amount $N$ of total storage is needed? An optimal solution has the smallest total storage for given parameters $n$, $k$, $m$, and~$t$. 
\end{question}

One can generalize batch codes in different ways.
Zhang et al.~\cite{zhang} consider optimal \emph{multiset batch codes}, which are batch codes in which an item may be requested more than once in a retrieval.
Silberstein~\cite{silberstein2015} generalized the above question by adding a requirement of \defn{redundancy}. 
At each moment, we allow some of the servers to be unavailable (for a practical example, consider that servers may be down for maintenance, or there is an accepted failure rate for the servers). 
If we place a bound, $r$, on the number of servers that may be unavailable at the same time, we are faced with a problem of retrieving each $k$-subset of the items from every collection of $m-r$ servers. 
To ensure that this is possible, intuitively, it will be necessary to store some ``redundant'' copies of items. 
Specifically, Silberstein investigated the existence of \emph{uniform} erasure batch codes by focusing on the following particular question.

\begin{question}[Silberstein \cite{silberstein2015}]\label{q2a}
Suppose a collection of $n$ items is to be stored over a set of $m$ servers, each of which hosts exactly $\theta$ items. 
At each moment, some number $r$ of the servers may be unavailable.  
Can the items be stored so that any $k$ items are simultaneously accessible from each collection of $m - r$ servers, while taking at most $t$ items from each server?
\end{question}

While Silberstein focuses on uniformity~\cite{silberstein2015}, we direct our attention toward erasure batch codes which are optimal.
To that end, we ask the following question:

\begin{question}\label{q2}
Suppose a collection of $n$ items is to be stored over a set of $m$ servers. At each moment, some number $r$ of the servers may be unavailable.  
Can the items be stored so that any $k$ items are simultaneously accessible from each collection of $m - r$ servers, while taking at most $t$ items from each server?
What amount $N$ of total storage is needed? An optimal solution has the smallest total storage for given parameters $n$, $k$, $m$, $t$, and~$r$.
\end{question}

In Section~\ref{sec:prelim}, we give formal combinatorial definitions and derive several construction lemmas paralleling the results of Bujt\'as and Tuza~\cite{tuza}.  
The two subsequent sections establish the optimal value of $N$ for certain parameter ranges, which are illustrated in
Figure~\ref{fig1}.

In Section~\ref{sec:extremal}, we study the extremal cases when $n$ is large, or small, compared to $m$, extending three results of Paterson, Stinson, and Wei.~\cite{psw}.  
Theorem~\ref{thm:tall} characterizes $N$ when $n \leq m$ and Theorem~\ref{thm:maxk} characterizes $N$ when $k$ is maximal -- the proof and constructions for these are trivial when $r=0$, while the proofs are nontrivial for larger $r$.
Theorem~\ref{thm:nbddbelow} characterizes $N$ when $n \geq (k-1)\binom{m}{r+k-1}$; our proof differs significantly from that given by Paterson, Stinson, and Wei~\cite{psw} when $r=0$.

Section~\ref{sec:gap} studies the ``gap'' between these results, following the approach of Bujt\'as and Tuza~\cite{tuza}. 
We obtain a result characterizing $N$ for certain values of $n$ as small as $\frac{k-1}{r+k-1}\binom{m}{r+k-2}$. 
In these cases, we find that the parameters for which we know optimal solutions are dependent on the existence of maximum packings.
The lower bound is easily expressed when $r=0$ -- in fact the lower bound is precisely $\binom{m}{k-2}$ -- but can be quite complex for larger~$r$.

In Section~\ref{sec:minimum}, we observe that the natural lower bound for the total storage of a erasure batch code, while only achievable in a trivial way when $r=0$, may be obtained by nontrivial means for larger $r$. 
We discuss some such cases, and relate some of them to the existence of graphs with a lower bound on their girth. 

\begin{figure}
\centering
\begin{tikzpicture}[>=stealth',thick,scale=.9]

\draw[->] (0,0)--(12.25,0);
\draw (0,0.15)--(0,-0.15);
\draw (3,0.15)--(3,-0.15);
\draw (6,0.15)--(6,-0.15);
\draw (9,0.15)--(9,-0.15);

\node at (0,-0.5) {$k$};
\node at (3,-0.5) {$m$};
\node at (12.5,0) {$n$};

\node at (5.5,-0.8) {}; 
\node at (9,-0.8) {\footnotesize $\displaystyle (k-1)\binom{m}{r+k-1}$};

\node at (1.5,0.9)  {Theorem~\ref{thm:tall}};
\node at (7.5,0.9)  {Theorem~\ref{thm:fmkr}};
\node at (10.5,0.9) {Theorem~\ref{thm:nbddbelow}};

\node at (7.5, 1.8) {Theorem~\ref{thm:maxk} for $k = m-r$};

\foreach \A/\B in { 2.9/0, 8.9/6, 12/9.1 } 
{ 
\draw [
    thick,
    decoration={
        brace,
        mirror,
        raise=0.4cm
    },
    decorate
] (\A,0)--(\B,0); 
}

\draw [
    thick,
    decoration={
        brace,
        mirror,
        raise=1.3cm
    },
    decorate
] (12,0)--(3.1,0); 

\end{tikzpicture}
\caption{Ranges of the parameter $n$ addressed here, in terms of $k$, $m$, and $r$, always assuming $t = 1$ and that the conditions of Lemma~\ref{lem:exists} are met. Theorem~\ref{thm:tall} applies when $k \leq n \leq m$, and Theorem~\ref{thm:maxk} applies when $n > m$ and $k = m-r$. Theorem~\ref{thm:fmkr} applies to certain values of $n$ less than $(k-1)\binom{m}{r+k-1}$, while Theorem~\ref{thm:nbddbelow} applies when $(k-1)\binom{m}{r+k-1} \leq n$. When $n = (k-1)\binom{m}{r+k-1}$, the constructions in the latter two theorems are the same.}
\label{fig1}
\end{figure}

\section{Erasure combinatorial batch codes}
\label{sec:prelim}

Solutions to Question~\ref{q1} are provided by \defn{combinatorial batch codes}, which were introduced by Paterson, Stinson, and Wei~\cite{psw} and have been the subject of several subsequent papers~\cite{brualdi,tuza3,tuza,tuza2}.  Although we will not directly use combinatorial batch codes here, we state their definition for comparison with the definition of erasure combinatorial batch codes. 

\begin{definition}[Paterson, Stinson, and Wei~\cite{psw}]
 A \defn{combinatorial batch code} with parameters $(n,k,m,t)$, abbreviated CBC or CBC$(n,k,m,t)$, is a multifamily $\mathcal{B}=\{B_1,\ldots,B_m\}$ of $m$ subsets, called \defn{servers}, of a set $[n]=\{1,2,3, \ldots n\}$ of $n$ items, called \defn{files}, such that for each $Y\subseteq X$ with $|Y|\leq k$ there exists subsets $C_i\subseteq B_i$ for which $|C_i|\leq t$ and $Y=C_1\cup C_2\cup \cdots \cup C_m$.
 
The \defn{weight} of a CBC $\mathcal{B}$ is the value
\[
N(\mathcal{B}) = |B_1|+|B_2|+\cdots+|B_m|.
\]
A CBC that obtains the minimal weight (as a function of the other parameters) is \defn{optimal}, and the minimum value is denoted $N(n,k,m,t)$.
\end{definition}

%
%
To address Question~\ref{q2}, we use a ``redundancy'' parameter, $r$, to measure the number of servers that may be inaccessible at one time in an erasure combinatorial batch code. 

\begin{definition}[Silberstein~\cite{silberstein2015}]
An \defn{erasure combinatorial batch code with redundancy $r$} with parameters $(n,k,m,t)$ (which we may abbreviate as $r\CBC$, $r\CBC(n,k,m,t)$, or $r\CBC(n,k,m)$ when $t=1$) is a multifamily $\mathcal{B}=\{B_1,\dots,B_m\}$ of $m$ subsets of $[n]$ such that for each $Y\subseteq [n]$ with $|Y|\leq k$ and $J\subseteq[m]$ with $|J|\geq m-r$, there exists subsets $C_j\subseteq B_j$ for each $j\in J$ such that $|C_i|\leq t$ and $Y=\bigcup_{j\in J}C_j$.  

Informally put, this means that for each collection $Y$ of $k$ or fewer files, and each collection $J$ of $m -r$ or more servers, it is possible to obtain all the files in $Y$ from the servers in $J$ while taking no more than~$t$ from each server.

The \defn{weight} of an $r\CBC$ $\mathcal{B}$ is
\[
N(\mathcal{B})  = |B_1|+|B_2|+\cdots+|B_m|,
\]
and an $r\CBC$ that obtains the minimal $N$ (as a function of $n$, $k$, $m$, $t$, and~$r$) is \defn{optimal}.
We denote this minimal value as $N(n,k,m,t;r)$, or simply $N(n,k,m;r)$ if $t=1$.
In this paper, we study the case $t = 1$ exclusively.  \end{definition}

The next lemma establishes the 
basic relations between the remaining parameters that are required for the existence of
an $r\CBC$. For the remainder of the paper, we will always assume that our parameters satisfy the inequalities stated in the lemma.

\begin{lemma}\label{lem:exists}
There is an $r\CBC$ with parameters $(n,k,m,1)$, where $k \geq 1$, if and only if $r< m$ and $k\leq \min\{n, m-r\}$.
\end{lemma}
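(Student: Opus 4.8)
The plan is to prove the two implications separately: necessity will follow from a short counting argument, and sufficiency from the most wasteful construction imaginable, since the lemma asks only for the \emph{existence} of some $r\CBC$.

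For the forward direction I would assume that an $r\CBC(n,k,m,1)$ exists and deduce the stated inequalities. That $k \le n$ is needed simply so that the $k$-subsets $Y \subseteq [n]$ we must retrieve are available. To obtain $k \le m-r$, I would fix a set $Y$ with $|Y| = k$ (available since $k \le n$) together with an index set $J \subseteq [m]$ of exactly $m-r$ servers, and apply the defining property to this pair. It produces sets $C_j \subseteq B_j$ with $|C_j| \le 1$ and $Y = \bigcup_{j\in J} C_j$; since the $k$ distinct elements of $Y$ must each appear in some $C_j$ while each $C_j$ contributes at most one element, at least $k$ of the $C_j$ are nonempty, whence $|J| \ge k$ and so $m - r \ge k$. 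Finally $r < m$ follows, since $k \ge 1$ forces $m - r \ge 1$; and in the degenerate case $r \ge m$ the empty index set $J = \emptyset$ already satisfies $|J| \ge m-r$, yet its empty union cannot cover a one-element $Y$, confirming that $r < m$ is genuinely required.

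For the reverse direction I would assume $r < m$ and $k \le \min\{n, m-r\}$ and simply place every file on every server, that is, set $B_j = [n]$ for all $j \in [m]$. Given any $Y$ with $|Y| \le k$ and any $J$ with $|J| \ge m-r$, the chain $|J| \ge m - r \ge k \ge |Y|$ shows there are at least $|Y|$ available servers, each containing every file of $Y$; assigning the elements of $Y$ to distinct servers of $J$ and letting $C_j$ be the corresponding singleton (or $\emptyset$ otherwise) yields $Y = \bigcup_{j \in J} C_j$ with every $|C_j| \le 1$. Hence this family is an $r\CBC$, establishing sufficiency.

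I expect no serious obstacle here: the counting in the forward direction is immediate, and the only points deserving care are the degenerate regime $r \ge m$ and the bookkeeping observation that the hypothesis $r < m$ is already subsumed by $k \le m-r$ once $k \ge 1$. Because the lemma concerns only existence, the extravagant all-ones construction suffices; the substantive and much harder task of \emph{minimizing} the weight $N(n,k,m;r)$ is precisely what the later sections take up.
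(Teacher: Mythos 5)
Your proposal is correct and takes essentially the same route as the paper: the all-ones storage (every file on every server) for sufficiency, and the same counting and degenerate-case observations for necessity ($r \geq m$ lets all servers be down, $k > n$ asks for more files than exist, and $k > m-r$ fails because only $m-r$ servers each yielding at most one file are guaranteed). The only differences are cosmetic---you swap which implication is called ``forward,'' and you carry out the $k \le m-r$ counting slightly more formally than the paper does.
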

\begin{proof}
Store every file on each server.
At any moment there are $m - r$ servers available, each containing every file, and thus we can retrieve any collection of $k$ files for each $k \leq\min\{n,m-r\}$.  

For the reverse direction, we show that the conditions are necessary. If $r \geq m$ then it would be possible for every server to be down, which cannot yield an $r\CBC$ when $k \geq 1$. The inequality $n < k$ is impossible because one cannot retrieve more than the total number of files. The inequality $k > m - r$ is impossible because there will be $m-r $ available servers, and with $t = 1$ we may only take one file from each server.
\end{proof}

%

%
%
%
%
%

Our first theorem extends Theorem~3 of Bujt\'as and Tuza~\cite{tuza2} to erasure combinatorial batch codes. To achieve this, we use the following extension of Hall's Marriage Theorem, which is also stated by Silberstein~\cite[Theorem 5]{silberstein2015}.  Although the proof method is standard, we have not located a proof in the literature, which leads us to include a proof here.

\begin{lemma}
\label{thm:hall_extension}
Let $\{A_1,\dots,A_n\}$ be a family of subsets of $M = [n]$ and let $r\geq 0$. Then the following are equivalent:
\begin{enumerate}
\item For each $r$-subset $M'\subseteq M$, there exist distinct elements $a_1,\dots,a_n$ such that $a_i\in A_i\backslash M'$ for each $i\in [n]$.
\item $\left |\bigcup_{j\in J}A_j\right |\geq r+c$ for every $c$-subset $J$ of $[n]$ and $c>0$.
\end{enumerate} 
\end{lemma}

\begin{proof}
Suppose that (i) is satisfied and let $J$ be a $c$-subset of $[n]$.
Let $M'$ be an $r$-subset of $M$.
Then there exist distinct $a_1,a_2,\dots,a_n$ such that $a_i\in A_i\backslash M'$ for each $i\in[n]$.
Hence $|\bigcup_{j\in J}A_j|\geq r+c$ since $\{a_i\mid i\in J\}\cup M'$ is a subset of $\bigcup_{j\in J}A_j$.

Suppose that (ii) is satisfied and let $M'$ be an $r$-subset of $M$.
Observe that for any $c$-subset $J$ of $[n]$ that 
\[\textstyle\left|\bigcup_{j\in J}(A_j\backslash M')\right|=\left|\left(\bigcup_{j\in J}A_j\right)\backslash M'\right|\geq \left|\left(\bigcup_{j\in J}A_j\right)\right|-r\geq (r+c)-r = c.\]
Hence by Hall's Marriage Theorem, there exist distinct elements $a_1,a_2,\dots,a_n$ such that $a_i\in A_i\backslash M'$ for each $i\in[n]$.
\end{proof}

%

In what follows, we present conditions under which a storage arrangement corresponds to an $r\CBC$.
To this end, we represent an $r\CBC(n,k,m)$ $\mathcal{B}$ by a $m \times n$ incidence matrix $A$ 
such that row $i$ of $A$ represents the set $B_i \subseteq [n]$ -- the list of items stored on server $i$. 
We let $A_j$ denote the subset of $[m]$ represented by column $j$ of~$A$ -- that is, $A_j$ is the list of servers on which item $j$ is stored. 
Additionally let $N(A)$ be the number of $1$s that appear in $A$.
Observe that 
\[ N(\mathcal{B}) = |B_1|+|B_2|+\cdots+|B_m| = |A_1| + |A_2| + \cdots + |A_n| = N(A),\]
and hence we say $A$ is optimal if $N(A) = N(n,k,m;r)$. 
 
We now give a characterization of when a storage arrangement is an $r\CBC$ in terms of the rows and columns of its associated incidence matrix.

\begin{theorem}\label{thm:grhc}
Suppose that $A$ is an $m\times n$ matrix with values in $\{0,1\}$. For each $j \leq n$, let $A_j$ be the subset of $[m]$ determined by column $j$ of~$A$. The following are equivalent: 

\begin{enumerate}

\item\label{grhc3}  The matrix $A$ represents an $r\CBC(n,k,m)$. 

\item\label{grhc1} For every $c \in [k]$ and every $c$-subset $J$ of $[n]$, $\left | \bigcup_{j \in J} A_j \right | \geq r+c$.  
That is, for each $c\in[k]$, the storage of any $c$ items uses at least $r+c$ servers.
\item\label{grhc2}  For every $d$-subset $I$ of $[m]$, with $r \leq d \leq r + k -1$, $|\{ i : A_i \subseteq I \}| \leq d-r$.  
That is, whenever $r\leq d\leq r+k-1$, any collection of $d$ servers control total access for at most $d-r$ items.
\end{enumerate}
\end{theorem}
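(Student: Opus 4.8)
The plan is to prove the two equivalences (\ref{grhc3})$\Leftrightarrow$(\ref{grhc1}) and (\ref{grhc1})$\Leftrightarrow$(\ref{grhc2}) separately, obtaining the first from the Hall-type \tref{thm:hall_extension} and the second from a direct double-counting argument. The first step is to recast the defining property of an $r\CBC$ in the language of systems of distinct representatives. Since $t=1$, a choice of subsets $C_j\subseteq B_j$ with $|C_j|\le 1$ and $Y=\bigcup_{j\in J}C_j$ is exactly an assignment of each file $y\in Y$ to a distinct server $j\in J$ with $y\in B_j$; in column terms, this is a system of distinct representatives for the family $\{A_y : y\in Y\}$ taking values in $J=[m]\setminus M'$, where $M'$ is the set of at most $r$ unavailable servers. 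Thus (\ref{grhc3}) says precisely that for every $Y\subseteq[n]$ with $|Y|\le k$ and every $M'\subseteq[m]$ with $|M'|\le r$, the family $\{A_y : y\in Y\}$ admits a system of distinct representatives avoiding $M'$.

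For (\ref{grhc3})$\Leftrightarrow$(\ref{grhc1}), I would fix $Y$ with $c=|Y|\le k$ and apply \tref{thm:hall_extension} to the family $\{A_y : y\in Y\}$ with $M=[m]$. Condition~(2) of that theorem asserts $\left|\bigcup_{y\in Y'}A_y\right|\ge r+|Y'|$ for every subset $Y'\subseteq Y$, which is exactly the instance of (\ref{grhc1}) for the columns indexed by $Y$ (every such $Y'$ has size at most $c\le k$), and crucially it does not mention $M'$. Hence the existence of a representative system avoiding any one $r$-subset is equivalent to its existence avoiding all of them, and equivalent to (\ref{grhc1}) holding for all subsets of $Y$. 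Ranging over all $Y$ yields the equivalence; the passage between ``$|M'|\le r$'' and ``$|M'|=r$'' is harmless, since $r<m$ lets us enlarge any smaller $M'$ to size exactly $r$, and avoiding the larger set implies avoiding the smaller.

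For (\ref{grhc1})$\Leftrightarrow$(\ref{grhc2}), I would argue by contraposition in both directions. If (\ref{grhc1}) fails, choose $c\in[k]$ and a $c$-subset $J$ of columns with $\left|\bigcup_{j\in J}A_j\right|\le r+c-1$, let $I$ be this union, and pad it with arbitrary rows up to size exactly $r$ should it be smaller (possible as $r<m$). Then $r\le|I|\le r+c-1\le r+k-1$ and every column of $J$ satisfies $A_j\subseteq I$, so at least $c$ columns lie inside $I$ while $|I|-r\le c-1$, contradicting (\ref{grhc2}). Conversely, if (\ref{grhc2}) fails, take a $d$-subset $I$ with $r\le d\le r+k-1$ and at least $d-r+1$ columns contained in $I$; setting $c=d-r+1$ gives $1\le c\le k$, and any $c$ of these columns span at most $|I|=r+c-1<r+c$ rows, contradicting (\ref{grhc1}).

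The routine parts are the reinterpretation via representative systems and the counting inequalities; the main thing to watch is the matching of index ranges. The Hall extension is phrased for an excluded set of size exactly $r$, whereas the code tolerates up to $r$ missing servers, and in (\ref{grhc2}) the number of rows $d$ is confined to $[r,r+k-1]$. The one genuinely delicate point is therefore ensuring that every witness set produced in the (\ref{grhc1})$\Leftrightarrow$(\ref{grhc2}) argument lands in this window: when a spanning union is naturally smaller than $r$ it must be padded up to size $r$, which is exactly where the hypothesis $r<m$ from \lref{lem:exists} is used.
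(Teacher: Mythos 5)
Your proposal is correct and follows essentially the same route as the paper: the equivalence of (\ref{grhc3}) and (\ref{grhc1}) via Theorem~\ref{thm:hall_extension} (with the reduction to systems of distinct representatives spelled out more explicitly than the paper does), and the equivalence of (\ref{grhc1}) and (\ref{grhc2}) by the same containment-counting argument, your contrapositive phrasing being the paper's proof by contradiction. The only cosmetic difference is in handling the degenerate case where the union of columns is small: the paper first establishes the case $c=1$ (every column has at least $r+1$ ones) so that the union automatically has size at least $r+1$, whereas you pad the union up to size exactly $r$ using $r<m$; both devices are valid and serve the same purpose.
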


\begin{proof}
The implication from (\ref{grhc3}) to (\ref{grhc1}) follows from the definition of an $r\CBC$ with $t = 1$. The implication from (\ref{grhc1}) to (\ref{grhc3}) is a direct application of Lemma~\ref{thm:hall_extension}. 
Therefore, it suffices to prove that  (\ref{grhc1})  and  (\ref{grhc2}) are equivalent.
%
%

First, assume that $A$ satisfies condition~(ii). 
Choose $d$ with $r \leq d < r + k $
and let $I$ be a $d$-subset of $[m]$.
Let $J = \{ i : A_i \subseteq I \}$ and let $w = |J|$.  
We want to show that $w \leq d - r$. 
Suppose otherwise: then $w > d - r$, that is, $d < r+ w$.
So we have a collection of more than $d - r$ columns that
are contained in at most $d$ rows, where $r \leq d < r + k$.
So, if we let $c = d - r + 1$, we have a collection of $c$ columns
that are contained in fewer than $c + r$ rows.
Because $r \leq d$, we have $c > 0$. Because $d < r + k$, 
we have $c \leq k$. Thus $c \in [k]$. This contradicts~(ii), which
states that each collection of $c$ columns
must span at least $d = c+r$ rows. Thus we have $w \leq d - r$, 
as desired.

Now assume that $A$ satisfies condition~(iii). 
First, we verify that $A$ satisfies condition~(ii) in the special case
$c = 1$. This follows from the special case of 
(iii) with $d = r$, which says that
for every $r$-subset $I$ of $[m]$ we have
$|\{i : A_i \subseteq I\}| \leq 0$. Thus there is no column 
with fewer than $r+1$ ones, which is precisely the statement of (ii) in the case~$c=1$. 

It remains to prove condition~(ii) for $c \geq 2$. To this end, 
choose $c \in [k]$ with $c \geq 2$ and let $J$ be a $c$-subset of~$[n]$. 
Let $I = \bigcup_{j \in J} A_j$ and let $d = |I|$. 
Note that, by the previous paragraph, each $A_j$ contains at least $r+1$ ones, 
and thus $d \geq r+1$. 

We want to show that $|I| \geq r + c$. Suppose otherwise; then
we have $r + 1 \leq d \leq r + c - 1$, so $r \leq d \leq r + k - 1$, as $c \leq k$. 
Thus, by~(iii), we have 
\[
|\{i : A_i \subseteq I\}| \leq d-r \leq (r+c-1) - r = c-1.
\]
However, we also have
$J \subseteq \{i : A_i \subseteq I\}$, so $|\{i : A_i \subseteq I\}| \geq c$. This is a contradiction, so we conclude $d \geq r + c$, as desired. 
\end{proof}

\begin{lemma}\label{lem:minmax}
Let $A$ be a matrix that represents an $r\CBC(n,k,m)$.
Then the number of $1$s in each column of $A$ is at least $r+1$, and if $A$ is optimal, at most $r+k$.
\end{lemma}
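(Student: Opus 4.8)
The plan is to establish the two bounds separately, in each case leaning on the column-span characterization in Theorem~\ref{thm:grhc}, condition~(\ref{grhc1}), which says that for every $c \in [k]$ and every $c$-subset $J$ of $[n]$ we have $\left|\bigcup_{j \in J} A_j\right| \geq r + c$.

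For the lower bound I would simply invoke condition~(\ref{grhc1}) with $c = 1$. Since a single column is a $1$-subset $J$ of $[n]$, the condition forces $|A_j| \geq r+1$ for every $j$, and a column of weight $w$ spans exactly $w$ rows. Thus each column has at least $r+1$ ones. This direction is immediate from the equivalence already established, and requires no further work.

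For the upper bound I would argue by contradiction via a weight-reducing modification. Suppose $A$ is optimal yet some column $j$ has at least $r+k+1$ ones. Form $A'$ by changing a single $1$ in column $j$ to a $0$, so that $A'_j = A_j \setminus \{i_0\}$ for some $i_0 \in A_j$ while every other column is unchanged; then $N(A') = N(A) - 1$. The claim is that $A'$ still represents an $r\CBC(n,k,m)$, contradicting optimality. To verify this I would check condition~(\ref{grhc1}) for $A'$: fix $c \in [k]$ and a $c$-subset $J$ of $[n]$, and split into two cases according to whether $j \in J$. If $j \notin J$, then $\bigcup_{j' \in J} A'_{j'} = \bigcup_{j' \in J} A_{j'}$, whose size is at least $r+c$ because $A$ is an $r\CBC$. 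If $j \in J$, then the union contains the modified column $A'_j$, which still has at least $r+k$ ones; since $c \leq k$, we get $\left|\bigcup_{j' \in J} A'_{j'}\right| \geq |A'_j| \geq r+k \geq r+c$. Hence $A'$ satisfies condition~(\ref{grhc1}) and is an $r\CBC$ of strictly smaller weight, the desired contradiction.

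I do not expect any genuine obstacle here. The only point requiring care is the case $j \in J$, and the hypothesis that the offending column carries more than $r+k$ ones is precisely what guarantees that a single deletion still leaves enough ones for that column alone to cover any $c \leq k$ columns containing it. The proof is therefore a direct application of the characterization together with a one-entry edit.
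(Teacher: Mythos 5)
Your proof is correct and follows essentially the same route as the paper's: the lower bound via condition~(\ref{grhc1}) of Theorem~\ref{thm:grhc} with $c=1$, and the upper bound by deleting a single $1$ from an overfull column and checking the two cases $j \notin J$ and $j \in J$, where the key inequality $|A'_j| \geq r+k \geq r+c$ is exactly the one the paper uses. No gaps; the argument matches the paper's proof in both structure and detail.
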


\begin{proof}
By setting $c=1$ in Theorem~\ref{thm:grhc}~(\ref{grhc1}), we see that each column of $A$ has cardinality at least $r+1$.

Now let $A$ represent an $r\CBC(n,k,m)$, and assume without loss of generality that $A_1$ has cardinality greater than $r+k$. We will show that $A$ is not optimal.
Remove an element from $A_1$ and call the resulting matrix $A'$.
Thus $|A'_1|+1=|A_1|> r+k$ and $A'_j=A_j$ for $2\leq j \leq n$.  

To show $A'$ represents an $r\CBC(n,k,m)$, let $J$ be a $c$-subset of~$[n]$ with $c\leq k$. 
If $1\notin J$, then \[\left |\bigcup_{j\in J}A'_j \right | = \left |\bigcup_{j\in J}A_j \right |\geq r+c.\]
If $1\in J$, then 
$$\left |\bigcup_{j\in J}A'_j \right| \geq  \left |A'_1 \right | = |A_1|-1\geq r+k\geq r+c.$$
Thus, by Theorem~\ref{thm:grhc}, $A'$ represents an $r\CBC(n,k,m)$. By construction, $N(A') = N(A) - 1$,  and thus $A$ is not optimal.
\end{proof}
%
%
The lemma allows us to bound the weight of an optimal $r\CBC$. 

\begin{corollary}\label{cor:bounds}
The following inequalities hold for $N(n,k,m;r)$:
\[
(r+1)n \leq N(n,k,m;r) \leq (r+k)n.
\]
\end{corollary}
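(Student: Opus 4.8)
The plan is to derive both inequalities directly from \lref{lem:minmax}, since that lemma already controls the cardinality of each individual column. Recall that $N(n,k,m;r)$ is the weight of an \emph{optimal} $r\CBC(n,k,m)$, so it suffices to produce one optimal matrix $A$ and bound $N(A) = \sum_{j=1}^{n} |A_j|$, the total number of $1$s, by summing the per-column bounds.

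For the lower bound, I would take any matrix $A$ representing an optimal $r\CBC(n,k,m)$. By \lref{lem:minmax} (or equivalently the $c=1$ case of \tref{thm:grhc}~(\ref{grhc1})), every column satisfies $|A_j| \geq r+1$. Summing over the $n$ columns gives
\[
N(n,k,m;r) = N(A) = \sum_{j=1}^{n} |A_j| \geq \sum_{j=1}^{n}(r+1) = (r+1)n,
\]
which is the left-hand inequality. This direction uses only the necessary condition and does not even require optimality.

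For the upper bound, I would again let $A$ be an optimal matrix. Now the second half of \lref{lem:minmax} applies: because $A$ is optimal, every column satisfies $|A_j| \leq r+k$. Summing over the $n$ columns gives
\[
N(n,k,m;r) = N(A) = \sum_{j=1}^{n} |A_j| \leq \sum_{j=1}^{n}(r+k) = (r+k)n,
\]
which is the right-hand inequality.

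There is really no main obstacle here, as this corollary is an immediate bookkeeping consequence of \lref{lem:minmax}: the lemma does all the work at the level of a single column, and the corollary simply adds the contributions of the $n$ columns. The only point worth stating carefully is that the upper bound genuinely requires the optimality hypothesis (a non-optimal $r\CBC$ could have columns of cardinality exceeding $r+k$), whereas the lower bound holds for every $r\CBC$. I would note that the conditions of \lref{lem:exists} are in force throughout, so an optimal code exists and the quantity $N(n,k,m;r)$ is well defined.
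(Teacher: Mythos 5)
Your proof is correct and follows exactly the route the paper intends: the paper states Corollary~\ref{cor:bounds} as an immediate consequence of Lemma~\ref{lem:minmax} (with no written proof), and your argument of summing the per-column bounds $r+1 \leq |A_j|$ for any $r\CBC$ and $|A_j| \leq r+k$ for an optimal one over the $n$ columns is precisely that intended deduction. Your added remarks on where optimality is needed and on the existence of an optimal code are accurate and harmless.
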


The corollary allows us to prove $N(n,1,m;r)=(r+1)n$. Assuming $k = 1$, form an $r\CBC$ $A$ with a matrix that has  $n$ columns each of cardinality $r+1$. This matrix $A$ is an $r\CBC$ with $N(A) = (r+1)n$, and by the corollary it is impossible to have a smaller value of $N$. 

\section{Extremal Results}\label{sec:extremal}



In this section, we establish the exact value of $N(n,k,m;r)$ for several families of parameter values. Several of our theorems simplify to previously known results when $r = 0$; we note this before each theorem. 

We first consider $r\CBC$s in which there are at least as many servers as files, that is, $n\leq m$.  When $r=0$, the conclusion of the next theorem reduces to $N(n,k,m;0)= n$, which is Theorem~3 of Paterson, Stinson, and Wei~\cite{psw}.  

\newcommand{\Mod}[1]{\ (\text{mod}\ #1)}

\begin{theorem}\label{thm:tall}
Let $n\leq m$.  Then $$N(n,k,m;r) = (r+1)n.$$
\end{theorem}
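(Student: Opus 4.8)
The plan is to observe that the lower bound is already free of charge: Corollary~\ref{cor:bounds} gives $N(n,k,m;r)\geq (r+1)n$ with no hypothesis on $n$. So the whole task reduces to exhibiting, for $n\leq m$, an $r\CBC(n,k,m)$ of weight exactly $(r+1)n$. By Lemma~\ref{lem:minmax} such a code is as light as possible precisely when every column of its incidence matrix has exactly $r+1$ ones, so what I really need is an $m\times n$ matrix in which each column is a subset of $[m]$ of size $r+1$ and which satisfies the span condition Theorem~\ref{thm:grhc}~(\ref{grhc1}): every $c$ columns span at least $r+c$ rows for each $c\in[k]$. The condition $c=1$ then holds with equality automatically, and the content is entirely in the larger values of $c$.

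First I would give the construction. Index the $m$ rows by $\mathbb{Z}_m$ and, for $j=0,1,\dots,n-1$, let column $j$ be the cyclic interval $A_j=\{j,j+1,\dots,j+r\}\pmod m$. Each column is an arc of length $r+1$, so the total weight is exactly $(r+1)n$, and because $n\leq m$ the starting points $0,\dots,n-1$ are distinct. It is worth noting \emph{why} the wraparound is essential rather than cosmetic: a non-cyclic ``banded'' matrix, or the naive idea of placing $r$ rows common to all columns together with one private row per column, both require $n+r\leq m$, whereas the hypothesis only gives $n\leq m$ and we may well have $n+r>m$ (this is exactly the regime $n>m-r$, which is compatible with $k\leq m-r$). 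The cyclic version absorbs this case automatically.

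The hard part will be verifying the span condition for the cyclic construction. The idea is to view the chosen $c$ columns as $c$ arcs of length $r+1$ with distinct starting points on the cycle $\mathbb{Z}_m$, and to decompose their union $U$ into its maximal runs (connected components), say of lengths $\ell_1,\dots,\ell_p$ with $p\geq 1$. Since each chosen arc is connected and contained in $U$, it lies inside a single run; an arc of length $r+1$ sitting inside a run of length $\ell_t$ can begin in only $\ell_t-r$ distinct positions, so a run assembled from $c_t$ of our arcs satisfies $\ell_t\geq c_t+r$ (the degenerate case $U=\mathbb{Z}_m$ is handled directly since then $|U|=m\geq r+c$, using $c\leq k\leq m-r$). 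Summing, $|U|=\sum_t \ell_t\geq \sum_t(c_t+r)=c+pr\geq c+r$, which is exactly Theorem~\ref{thm:grhc}~(\ref{grhc1}); the inequality $r+c\leq r+k\leq m$ guarantees the target $r+c$ does not exceed the number of available rows. Establishing the run-length bound $\ell_t\geq c_t+r$ cleanly, including the wraparound bookkeeping, is the only real obstacle; once it is in hand the theorem follows by combining it with the corollary's lower bound.
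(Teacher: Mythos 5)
Your proposal is correct, and it is built on the same construction as the paper: the columns are the cyclic intervals $A_j=\{j,j+1,\dots,j+r\}\pmod m$, each of weight $r+1$, with optimality then immediate from the lower bound in Corollary~\ref{cor:bounds}. Where you genuinely diverge is in verifying condition~(\ref{grhc1}) of Theorem~\ref{thm:grhc}. The paper orders the chosen starting points $j_1<\cdots<j_c$ and does a case analysis on the cyclic gaps between consecutive starting points: if some gap exceeds $r$, the corresponding arc $A_{j_i}$ avoids all other starting points, so $\left|A_{j_i}\cup(J\setminus\{j_i\})\right|=(r+1)+(c-1)=r+c$; if every gap is at most $r$, the arcs cover all of $[m]$ and $m\geq r+k\geq r+c$. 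You instead decompose the union $U$ into maximal runs and count admissible starting positions within each run, obtaining $\ell_t\geq c_t+r$ and hence the stronger estimate $|U|\geq c+pr$; your degenerate case $U=\mathbb{Z}_m$ coincides exactly with the paper's second case, invoked with the same inequality $m\geq r+k$. Each approach has merit: the component decomposition is more structural, yields a sharper bound than required, and sidesteps the wraparound index bookkeeping that forces the paper's Case~1 into subcases (whether $i=c$ and whether the arc wraps past $m$); the paper's argument is leaner in machinery, since exhibiting one isolated arc together with the $c-1$ remaining starting points produces $r+c$ distinct rows with no notion of connected components needed. Finally, the step you flag as the ``only real obstacle'' --- the run-length bound --- is essentially complete as you sketched it: once $U\neq\mathbb{Z}_m$, every run is a proper arc, each chosen arc is connected and so lies in exactly one run, and $c_t$ distinct starting points drawn from the $\ell_t-r$ admissible positions give $c_t\leq\ell_t-r$ directly.
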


\begin{proof} 
Let $A$ be the $m\times n$ matrix with columns  corresponding to the sets
$$A_j = \{j+x \Mod m :  x  = 0, 1, \ldots, r \}.$$  
See Figure~\ref{fig:tall} for an example.
We first show that the conditions of Theorem~\ref{thm:grhc}(\ref{grhc1}) are met by establishing for each $c\in[k]$ and each $c$-subset $J$ of $[n]$ that $|\cup_{j\in J}A_j|\geq r+c$ and reaches equality if either $c=m-r$ or $J=\{i,i+1,\dots,i+c-1\}$ for some $i\in[n]$.

This is trivial when $c=1$.
Let $2\leq \ell \leq m-r$ and suppose the above condition holds for any $( \ell-1)$-subset of $[n]$.
Let $x\in J$, $J'=J\backslash\{x\}$, and $U = \cup_{j\in J'}A_j$; we now show that $|U\cup A_x|\geq r+ \ell$.
Note that $|U|\geq r+ \ell-1$, so if $|U|\geq r+ \ell$, then $|U\cup A_x|\geq r+ \ell$.
Suppose instead that $|U|=r+ \ell-1$.
Then $J'=\{i,i+1,\dots,i+ \ell-2\}$ for some $i\in[n]$.
Therefore $U=\{i,i+1,\dots,i+ \ell+r-2\}$.
Observe that since $ \ell+r\leq m$ that $i+ \ell+r-1\notin U$.
If $x\notin U$, then since $x\in A_x$, it follows that $x\in U\cup A_x$ and hence $|U\cup A_x|\geq r+ \ell$.
If $x\in U$, then $x\in\{i+ \ell-1,i+ \ell,\dots,i+ \ell+r-2\}$ and hence $i+ \ell+r-1\in A_x$.
So $|U\cup A_x|\geq r+ \ell$.
Hence by Theorem~\ref{thm:grhc}(\ref{grhc1}) $A$ is an $r\CBC$.
Because $A$ is an $r\CBC$ and $\sum_{j\in [n]} |A_j| = (r+1)n$, $A$ is optimal by Corollary~\ref{cor:bounds}.
\end{proof}

We next consider $r\CBC$s with at least as many files as servers, that is, $n\geq m$, and a maximal number of files being retrieved, $k=m-r$. 
When $r=0$, the conclusion of the next theorem reduces to $N(n,m,m;0)=mn-m(m-1)$, which is Theorem~4 of Paterson, Stinson, and Wei~\cite{psw}.

\begin{figure}
\[A = \begin{bmatrix}1&0&0&1\\1&1&0&0\\1&1&1&0\\1&1&1&1\\0&1&1&1\\0&0&1&1\end{bmatrix}\]
\caption{The matrix $A$ constructed as in the proof of Theorem~\ref{thm:tall} for $m=6, n=4, r=3,$ and $k\leq 3$.}\label{fig:tall}
\end{figure}

\begin{theorem}\label{thm:maxk}
 If $m \leq n$ and $k=m-r$, then $$N(n,m-r,m;r) = mn-m(m-r-1).$$
\end{theorem}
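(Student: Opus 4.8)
The plan is to prove the two matching inequalities separately, passing through the count of $0$s in the matrix: an $m\times n$ matrix has $mn$ entries, so minimizing the weight $N$ is the same as maximizing the number of $0$s, and the target value rewrites as $mn - m(m-r-1) = m(n-m+r+1)$. I would first establish the lower bound $N(n,m-r,m;r)\geq mn-m(m-r-1)$ by capping the number of $0$s in each \emph{row}, and then match it with an explicit construction assembled from the matrix of Theorem~\ref{thm:tall}.

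For the lower bound, let $A$ represent an $r\CBC(n,m-r,m)$ and fix a row $i\in[m]$. The key observation is that row $i$ contains at most $m-r-1$ zeros. Indeed, if row $i$ had at least $m-r$ zeros, there would be a set $J$ of $m-r$ columns with $A_{ij}=0$, that is $i\notin A_j$, for every $j\in J$. Taking $c=|J|=m-r=k$, we would get $i\notin\bigcup_{j\in J}A_j$, so $|\bigcup_{j\in J}A_j|\leq m-1<r+c$, contradicting Theorem~\ref{thm:grhc}~(\ref{grhc1}). Summing this cap over the $m$ rows shows $A$ has at most $m(m-r-1)$ zeros, hence $N(A)\geq mn-m(m-r-1)$.

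For the upper bound, I would exhibit a matrix of exactly this weight. Let $M$ be the $m\times m$ circulant whose columns are $A_j=\{\,j+x\Mod m : x=0,1,\ldots,r\,\}$, which is precisely the matrix constructed in the proof of Theorem~\ref{thm:tall} specialized to $n=m$; since $m\leq m$ and $m-r\leq\min\{m,m-r\}$, that proof shows $M$ is an $r\CBC(m,m-r,m)$ of weight $(r+1)m$. Now form the $m\times n$ matrix $A$ by appending $n-m\geq0$ all-ones columns to $M$. To verify $A$ is an $r\CBC(n,m-r,m)$, take any $c$-subset $J$ of its columns with $c\leq m-r$: if $J$ meets an all-ones column then $\bigcup_{j\in J}A_j=[m]$, whose size is $m=r+(m-r)\geq r+c$; otherwise $J$ lies among the columns of $M$ and the spanning bound $|\bigcup_{j\in J}A_j|\geq r+c$ is inherited from $M$. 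By Theorem~\ref{thm:grhc} this makes $A$ an $r\CBC$, and its weight is $(r+1)m+m(n-m)=m(n-m+r+1)=mn-m(m-r-1)$, matching the lower bound.

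I expect the only real subtlety to be the bookkeeping that lets the lower bound be governed by the \emph{rows} while the column constraints (each column needs at least $r+1$ ones, by Lemma~\ref{lem:minmax}) are what the construction must spend its budget to satisfy; the crucial point is that the single condition $c=k=m-r$ of Theorem~\ref{thm:grhc} is exactly a per-row cap of $m-r-1$ zeros, which is tight and forces every row of an optimal matrix to be saturated with zeros. Once Theorem~\ref{thm:tall} is invoked for the circulant block, the verification of the construction is immediate and no genuinely new case analysis should be needed.
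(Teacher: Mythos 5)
Your proposal is correct and follows essentially the same route as the paper: the identical construction (the circulant block from Theorem~\ref{thm:tall} padded with all-ones columns) for the upper bound, and the same key observation for the lower bound, namely that a row with $m-r$ or more zeros yields a $(m-r)$-subset $J$ of columns whose union misses that row, violating Theorem~\ref{thm:grhc} at $c=k=m-r$. The only difference is cosmetic: you state the per-row cap of $m-r-1$ zeros directly and sum over rows, while the paper phrases it as a contradiction via pigeonhole on an allegedly lighter optimal code.
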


\begin{proof}  Let $A$ be the $m\times n$ matrix with columns given by 
$$A_j = \{j+x \Mod m :  x  = 0, 1, \ldots, r \} \text{ for }j\leq m.$$  
%
%
%
%
 and $A_j = \{1,2,\ldots,m\}$ for $m<j\leq n$.  See Fig.~\ref{fig:maxk} for an example.  We first show that $A$ is an $r\CBC$.  Let $c\leq m-r$ and let $J\subseteq [n]$ with $|J|=c$.  Then, $|\bigcup_{j\in J} A_j| = m\geq r+c$ if $J\cap\{m+1, \ldots, n\}\neq\emptyset$.  So, suppose $J \cap \{m+1,\ldots,n\} = \emptyset$.  Then, we are only considering the first $m$ columns of $A$, so let $A'$ be the $m\times m$ matrix with columns given by $\{A_1,\ldots,A_m\}$.  We can use the argument from the proof of Theorem~\ref{thm:tall} to show that $A'$ is an $r\CBC$, and thus satisfies Theorem~\ref{thm:grhc}.  Therefore $A$ is an $r\CBC$.  Note that in $A$, for each $d \in [m]$,  $|\{i:d \in A_i\}|=n-m+r+1$.

Because $A$ is an $r\CBC$ with $N(A)=m(n-m+r+1)$, we know that $N(n,m-r,m;r)\leq   m(n-m+r+1)$.    Suppose that $N(n,m-r,m;r)<m(n-m+r+1)$.  Let $B$ be an $r$\CBC\ with $N(B)=N(n,m-r,m;r)$.  Then there must be some $d \in [m]$ for which $|\{i:d \in B_i\}|<n-m+r+1$.   Let $J \subseteq [n]\setminus \{i:d\in B_i\}$ with $|J|=m-r$.  Such a  $J$ exists because 
$$\left |[n]\setminus \{i:d\in B_i\} \right |>n-(n-m+r+1) = m-r-1.$$
 Then $|\bigcup_{j\in J}B_j|<(m-r)+r$, because $d\notin B_j$ for any $j \in J$.  Therefore, by Theorem~\ref{thm:grhc}, $B$ is not an $r\CBC$. 

We can thus conclude that $A$ is optimal and \[N(n,m-r,m;r) = m(n-m+r+1).\qedhere\]
\end{proof}

\begin{figure}
\[A=\begin{bmatrix} 1&0&0&1&1&1&1\\1&1&0&0&1&1&1\\1&1&1&0&0&1&1\\1&1&1&1&0&0&1\\0&1&1&1&1&0&1\\0&0&1&1&1&1&1\end{bmatrix}\]
\caption{The matrix $A$ constructed as in the proof of Theorem~\ref{thm:maxk} when $m=6, n=7, r=3,$ and $k=3$.}\label{fig:maxk}
\end{figure}
We now examine what occurs when $n$ is large.
We first prove two technical lemmas, which parallel lemmas of Bujt\'as and Tuza~\cite{tuza3}.

\begin{lemma}
\label{lem:move_ones}
Let $A$ be an $r\CBC$, and assume there are $i, j \in[n]$ with $A_i \subseteq A_j$. 
Let $R$ be a nonempty set with $R \subseteq A_j \setminus A_i$.  Replacing columns $A_i$ and $A_j$ with $A'_i = A_i \cup R$ and $A'_j = A_j \setminus R$, respectively, produces a matrix $A'$ which is an $r\CBC$ with the same weight as $A$.
\end{lemma}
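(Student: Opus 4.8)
The plan is to use the characterization of $r\CBC$s given in Theorem~\ref{thm:grhc}~(\ref{grhc1}): a matrix represents an $r\CBC(n,k,m)$ if and only if every collection of $c$ columns (for $c \in [k]$) spans at least $r+c$ rows. Since the weight $N(A') = N(A)$ is immediate—we move $|R|$ ones from column $j$ to column $i$ without changing the total count—the entire task reduces to verifying that $A'$ still satisfies condition~(\ref{grhc1}). The key structural observation is that $A'_i \cup A'_j = (A_i \cup R) \cup (A_j \setminus R) = A_i \cup A_j = A_j$, because $A_i \subsetneq A_j$ and $R \subseteq A_j$; so the \emph{union} of the two modified columns is unchanged, equal to $A_j$. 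This invariance of the pairwise union is what will drive the whole argument.

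First I would fix $c \in [k]$ and an arbitrary $c$-subset $J \subseteq [n]$, and split into cases according to how $J$ meets $\{i,j\}$. If $J$ contains neither $i$ nor $j$, or contains $j$ but not $i$, the relevant columns are entirely unchanged and $\bigcup_{\ell \in J} A'_\ell = \bigcup_{\ell \in J} A_\ell$, so the bound $\geq r+c$ carries over from $A$. The case where $J$ contains both $i$ and $j$ is handled by the union-invariance above: since $A'_i \cup A'_j = A_i \cup A_j$, the total union over $J$ is again unchanged, giving the bound directly. The only genuinely new case is when $i \in J$ but $j \notin J$, where column $i$ has \emph{grown} to $A'_i = A_i \cup R \supseteq A_i$; here $\bigcup_{\ell \in J} A'_\ell \supseteq \bigcup_{\ell \in J} A_\ell$, so the union can only have gotten larger, and the bound again follows from the corresponding bound for $A$.

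The main obstacle—though it turns out to be mild—is organizing these cases cleanly and making sure the union-invariance identity $A'_i \cup A'_j = A_j$ is justified using both hypotheses $A_i \subsetneq A_j$ and $R \subseteq A_j \setminus A_i$. Once that identity is in hand, every case produces a union that either equals or contains the corresponding union in $A$, so condition~(\ref{grhc1}) for $A$ transfers to $A'$ with no quantitative loss. I would therefore conclude that $A'$ satisfies Theorem~\ref{thm:grhc}~(\ref{grhc1}) and hence is an $r\CBC(n,k,m)$, and that $N(A') = N(A)$ since exactly $|R|$ ones were relocated from one column to another. I expect the write-up to be short, with the case $i \in J$, $j \notin J$ being the one worth stating explicitly since it is the only place the containment $A_i \subseteq A'_i$ is used.
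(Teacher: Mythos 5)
Your overall strategy---verifying condition~(\ref{grhc1}) of Theorem~\ref{thm:grhc} by splitting on how the $c$-set $J$ meets $\{i,j\}$---is viable, and it genuinely differs from the paper's proof, which instead verifies condition~(\ref{grhc2}) by showing that for every row subset $I$ the number of columns of $A'$ contained in $I$ is at most the corresponding count for $A$. However, your case analysis has the easy and hard cases swapped, and the hard case is dismissed with a false claim. When $j \in J$ and $i \notin J$, the relevant columns are \emph{not} ``entirely unchanged'': column $j$ has shrunk from $A_j$ to $A'_j = A_j \setminus R$ with $R$ nonempty, so $\bigcup_{\ell \in J} A'_\ell$ may be a proper subset of $\bigcup_{\ell \in J} A_\ell$, and the bound $\geq r+c$ does not simply carry over from $A$. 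Meanwhile the case you single out as ``the only genuinely new case,'' namely $i \in J$ and $j \notin J$, is the trivial one, since there the only modified column grew and the union can only get larger. So as written the proof leaves unverified exactly the one configuration in which the defining condition could fail.

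The gap is repairable within your framework. When $j \in J$ and $i \notin J$, use the fact that $A'_j = A_j \setminus R \supseteq A_i$ (this holds because $A_i \subseteq A_j$ and $R \subseteq A_j \setminus A_i$, so removing $R$ cannot delete any element of $A_i$). Setting $J' = (J \setminus \{j\}) \cup \{i\}$, which is again a $c$-subset of $[n]$ since $i \notin J$, and noting $A'_\ell = A_\ell$ for all $\ell \in J \setminus \{j\}$, you get
\[
\left|\bigcup_{\ell \in J} A'_\ell\right| \;\geq\; \left|A_i \cup \bigcup_{\ell \in J \setminus \{j\}} A_\ell\right| \;=\; \left|\bigcup_{\ell \in J'} A_\ell\right| \;\geq\; r+c,
\]
where the last inequality is condition~(\ref{grhc1}) applied to $A$ and the $c$-set $J'$. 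With this substitution argument replacing the incorrect ``unchanged'' claim, your proof goes through and stands as a legitimate alternative to the paper's counting argument via condition~(\ref{grhc2}); the weight computation $N(A')=N(A)$ is fine as you stated it.
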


\begin{proof}
Assume that $A$, $A'$, $i$, and $j$ are as in the statement of the theorem.
We wish to prove that $A'$ satisfies condition~(iii) of Theorem~\ref{thm:grhc}. 
To this end, assume that $I$ is a $d$-subset of $[m]$. 
It is sufficient to show that $\sigma' \leq\sigma$ where $\sigma$ and $\sigma'$ denote the number of columns of $A$ and $A'$, respectively, which are contained in $I$.

The proof has several cases. 
If $A_j \subseteq I$, then $A_i \subseteq I$ as well, because $A_i \subseteq A_j$. 
Thus $A'_i$ and $A'_j$ are also subsets of $I$. 
Thus if $A_j \subseteq I$ then $\sigma  = \sigma'$.

If $A_i \not \subseteq I$ then, because $A_i \subseteq A_j$, we have that $A_j \not \subseteq I$. 
Thus neither of $A'_i$ and $A'_j$ is a subset of $I$, so again $\sigma = \sigma'$.

Because $A_i \subseteq A_j$, there is only one additional case, which occurs when $A_i \subseteq I$ but $A_j \not \subseteq I$. 
There are two subcases: if $A_j \setminus I \subseteq R$, then $A'_j \subseteq I$ and $A'_i \not \subseteq I$. 
Otherwise, if $A_j \setminus I \not \subseteq R$, then neither $A'_i$ nor $A'_j$ is a subset of~$I$. 
Thus, in both subcases, we have $\sigma' \leq \sigma$. 
So $A'$ is an $r\CBC$ and since $|A_i|+|A_j|=|A_i'|+|A_j'|$, it follows that $A'$ has the same weight as $A$.
\end{proof}

\begin{lemma}
\label{lem:types}
For every matrix $A$ representing an optimal $r\CBC(n,k,m)$, there exists a matrix $A'$ also representing an optimal $r\CBC(n,k,m)$ for which 
either each column of $A'$ has weight at most $r+k-1$, or each column of $A'$ has weight $r+k-1$ or $r+k$.

\begin{proof}
By Lemma~\ref{lem:minmax}, if $A$ represents an optimal $r\CBC$ then all columns of $A$ have cardinality  at most $r+k$.
Assume $A$ has a column $C'$ with cardinality $r+k$ and a column $C$ with cardinality $j$, where $r+1\leq j \leq r+k-2$.
Observe that if an $r\CBC$ has a column of cardinality $r+k$, one can replace the column with any other column of cardinality $r+k$, and still satisfy the $r\CBC$ property.
So we can assume that 
$C'$ contains all $1$s of $C$.
By our previous result, we can then produce another $r\CBC$ with one fewer column of cardinality $r+k$, but with the same weight.

Proceed inductively until either the resulting $r\CBC$ has no columns of cardinality $r+k$ or there are no columns of cardinality less than $r+k-1$.
In these cases, we produce either an $r\CBC$ with all columns having cardinality at most $r+k-1$, or an $r\CBC$ with all columns having cardinality $r+k-1$ or $r+k$ , respectively.
At every step, the weight of the $r\CBC$ remains unchanged, and therefore the final $r\CBC$ is optimal.
\end{proof}
\end{lemma}

When $r = 0$, the conclusion of our next result simplifies to $N(n,k,m;0) = kn-(k-1)\binom{m}{k-1}$, which is Theorem~8 of Paterson, Stinson, and Wei~\cite{psw}.

\begin{theorem}\label{thm:nbddbelow}Let $r\geq 0$ and $k\leq m-r$ be integers.  If $n \geq (k-1)\binom{m}{r+k-1}$, then 
\[N(n,k,m;r) = (r+k)n - (k-1)\binom{m}{r+k-1}.\]
\end{theorem}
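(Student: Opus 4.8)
The plan is to establish the two matching inequalities separately, using the row characterization in condition~(\ref{grhc2}) of Theorem~\ref{thm:grhc} with the single value $d = r+k-1$. This is the largest value of $d$ permitted in~(\ref{grhc2}), and there it asserts that every $(r+k-1)$-subset $I$ of $[m]$ contains at most $(r+k-1)-r = k-1$ columns (that is, columns $A_i$ with $A_i \subseteq I$). Both the construction for the upper bound and the counting argument for the lower bound will be driven entirely by this one constraint.

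For the upper bound I would exhibit an explicit matrix. Since $n \geq (k-1)\binom{m}{r+k-1}$, I can devote $(k-1)\binom{m}{r+k-1}$ of the columns to placing exactly $k-1$ copies of each $(r+k-1)$-subset of $[m]$, and fill the remaining $n - (k-1)\binom{m}{r+k-1}$ columns with arbitrary $(r+k)$-subsets of $[m]$, which exist because $k \leq m-r$. I would then verify condition~(\ref{grhc2}): for a $d$-subset $I$ with $r \le d \le r+k-1$, no column of size $r+k$ can sit inside $I$, while a column of size $r+k-1$ lies in $I$ only when $d = r+k-1$ and the column equals $I$, of which there are exactly $k-1$. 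Since $k-1 = d-r$ in that only nonvacuous case, the condition holds. The weight of this matrix is $(r+k-1)(k-1)\binom{m}{r+k-1} + (r+k)\left(n - (k-1)\binom{m}{r+k-1}\right) = (r+k)n - (k-1)\binom{m}{r+k-1}$, as required.

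For the lower bound I would take an optimal $r\CBC$ $A$ and measure each column by its \emph{deficiency} $\delta_j = (r+k) - |A_j|$, which is nonnegative by Lemma~\ref{lem:minmax}. Then $N(A) = (r+k)n - \sum_j \delta_j$, so it suffices to prove $\sum_j \delta_j \le (k-1)\binom{m}{r+k-1}$. The key inequality is that for each column $\delta_j$ is at most the number $\binom{m-|A_j|}{\,r+k-1-|A_j|\,}$ of $(r+k-1)$-subsets of $[m]$ containing $A_j$: writing $t = r+k-1-|A_j|$ and $M = m-(r+k-1) \ge 1$ (here is where $k \le m-r$ enters), this reduces to $\binom{M+t}{t} \ge t+1$, which holds since $\binom{M+t}{t} \ge \binom{t+1}{t} = t+1$. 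Summing this bound over all $n$ columns and then double-counting the pairs $(j,I)$ with $A_j \subseteq I$ and $|I| = r+k-1$ by grouping on $I$ — where condition~(\ref{grhc2}) caps each group at $k-1$ — yields $\sum_j \delta_j \le (k-1)\binom{m}{r+k-1}$, completing the lower bound.

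I expect the main obstacle to be the lower bound, specifically the step bounding each column's deficiency by its number of $(r+k-1)$-supersets; this is precisely where the hypothesis $k \le m-r$ is essential, and it is what allows the single instance of condition~(\ref{grhc2}) at $d = r+k-1$ to control columns of all sizes simultaneously. As an alternative one could first invoke Lemma~\ref{lem:types} to reduce to an optimal $A$ whose columns have only sizes $r+k-1$ and $r+k$ (the all-columns-of-size-$\le r+k-1$ case forces $n \le (k-1)\binom{m}{r+k-1}$ by the same counting, so it either cannot occur or degenerates to all columns of size $r+k-1$), after which the deficiency bound is immediate; I prefer the direct counting above because it treats every column size uniformly and does not require splitting into the two cases of Lemma~\ref{lem:types}.
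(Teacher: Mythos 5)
Your proof is correct, and while your upper-bound construction is exactly the paper's (the paper verifies it through the column condition~(\ref{grhc1}) of Theorem~\ref{thm:grhc}, you through the row condition~(\ref{grhc2}); both verifications go through), your lower bound takes a genuinely different route. The paper first invokes Lemma~\ref{lem:types} (which itself rests on the column-swapping Lemma~\ref{lem:move_ones}) to reduce an optimal code to one of two structural types, and then argues each type separately: for type~(i) it shows by a pair-counting argument that necessarily $n=(k-1)\binom{m}{r+k-1}$ and every column has cardinality exactly $r+k-1$, while for type~(ii) the bound is immediate from the cap of $(k-1)\binom{m}{r+k-1}$ on the number of columns of cardinality $r+k-1$. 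You instead bound the total deficiency $\sum_j \bigl((r+k)-|A_j|\bigr)$ directly for an arbitrary optimal code: the inequality $(r+k)-|A_j| \leq \binom{m-|A_j|}{r+k-1-|A_j|}$ (valid since $m\geq r+k$, i.e.\ $k\leq m-r$, and trivially true with $0\leq 0$ for columns of full size $r+k$), followed by the double count of pairs $(j,I)$ with $A_j\subseteq I$ and $|I|=r+k-1$, capped at $k-1$ per $I$ by condition~(\ref{grhc2}). This double count is precisely the paper's own Lemma~\ref{prop:upperbound} from Section~\ref{sec:gap}, and your deficiency inequality is a simple special case in the spirit of Lemma~\ref{lem:tuza}; so in effect you prove this theorem with the machinery the paper only deploys later for Theorem~\ref{thm:fmkr}. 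What your route buys: it treats all column sizes uniformly, needs neither Lemma~\ref{lem:types} nor Lemma~\ref{lem:move_ones}, and absorbs the $k=1$ case that the paper handles separately (when $k=1$ all deficiencies are $0$ and the bound is vacuously tight). What the paper's route buys: Lemma~\ref{lem:types} is reused in the proof of Theorem~\ref{thm:fmkr}, so its cost is amortized, and the type~(i) analysis yields extra structural information about optimal codes (all columns of size below $r+k$ forces $n=(k-1)\binom{m}{r+k-1}$ with every column of size exactly $r+k-1$), which your aggregate count does not expose.
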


\begin{proof}
Following Corollary~\ref{cor:bounds} of Section~\ref{sec:prelim}, we showed that $N(n,1,m;r)=(r+1)n$, so the result holds when $k=1$.
We may thus assume that $k\geq 2$.
Set $M = \binom{m}{r+k-1}$.  
Let $\{A_i\mid 1\leq i\leq (k-1)M\}$ consist of $k-1$ copies of each possible subset of $[m]$ with cardinality $r+k-1$, and let $\{A_i\mid (k-1)M+1 \leq i \leq n\}$ be a set of any $n-(k-1)M$ subsets of $[m]$ with cardinality $r+k$.
Let $A$ be the $m\times n$ matrix defined by the union of these two sets.  For an example, see Figure~\ref{fig:nbddbelow}.  It follows that 
\begin{align*}
N(A) &= (r+k-1)(k-1)M+(r+k)[n-(k-1)M] \\
&= (r+k)n - (k-1)M.
\end{align*}

Let $c \leq k$ and $J \subseteq[n]$ with $|J|=c$.  If there is some $j' \in J$ with $j'\geq (k-1)M+1$, then $\left|\bigcup_{j\in J}A_j\right|\geq |A_{j'}|=r+k \geq r+c$.  Suppose then that $J \subseteq \{1,\ldots,(k-1)M\}$.  Then, for each $j$, $|A_j| = r+k-1$, so if $c<k$, $\left|\bigcup_{j\in J}A_j\right|\geq r+k-1\geq r+c$.  Suppose then that $c=k$.  Consider $j_1, j_2 \in J$ with $A_{j_1} \neq A_{j_2}$.  Such a pair of sets must exist because there are only $k-1$ copies of each subset of $[m]$ with cardinality $r+k-1$. Thus $\left|\bigcup_{j\in J}A_j \right| \geq |A_{j_1}\cup A_{j_2}| \geq r+k-1+1  = r+c$.  Thus, in all cases, Theorem~\ref{thm:grhc}~(ii) is satisfied and $A$ is an $r\CBC$.

Let $B$ be an optimal $r\CBC(n,k,m)$.
Without loss of generality, we can assume $B$ is of type (i) or (ii) as outlined in Lemma \ref{lem:types}.
Suppose that $B$ is of type (i), so that $r+1\leq |B_j|\leq r+k-1$ for each $j \leq n$.
Because $B$ is an $r\CBC$, every $(r+k-1)$-subset of $[m]$ contains at most $k-1$ columns of $B$, meaning that $n\leq (k-1)M$ and thus $n=(k-1)M$.
Let $\mathcal{C}$ be the set of ordered pairs
\[ \mathcal{C} = \{ (B_j,I)\mid k\in[n],\ B_k\subseteq I \subseteq[m],\ |I|=r+k-1\}.\]
Observe that for each $j\in[n]$, since $|B_j|\leq r+k-1$, there is at least one such ordered pair in $\mathcal{C}$ including $B_j$.
Therefore $|\mathcal{C}|\geq n$.
However, for each $I\subseteq[m]$ with $|I|=r+k-1$, there are at most $k-1$ columns of $B$ which $I$ contains.
So $|\mathcal{C}| \leq (k-1)M$.
Therefore $|\mathcal{C}|=(k-1)M$ and hence each column of $B$ is contained in exactly one subset of cardinality $r+k-1$.
So each column of $B$ has cardinality $r+k-1$, and it follows that $N(B) = (r+k)n-(k-1)M$.

Now, suppose that $B$ is of type (ii), that is, $r+k-1\leq |B_j|\leq r+k$ for each $j\leq n$.
Because $B$ is an $r\CBC$, the maximal number of columns of $B$ with cardinality $r+k-1$ is $(k-1)M$.
Therefore 
\begin{align*}
N(B)&\geq (r+k-1)(k-1)M+(r+k)[n-(k-1)M] \\
&= (r+k)n - (k-1)M. \qedhere
\end{align*}
\end{proof}

\begin{figure}
\[A=\begin{bmatrix} 1&1&1&0&0&0&1&0\\1&0&0&1&1&0&1&1\\0&1&0&1&0&1&1&1\\0&0&1&0&1&1&0&1\end{bmatrix}\]
\caption{The matrix $A$ constructed as in the proof of Theorem \ref{thm:nbddbelow} when $m=4, k=2, r=1,$ and $n=8\geq (k-1)\binom{m}{r+k-1}$.}\label{fig:nbddbelow}
\end{figure}

\section{Narrowing the gap}\label{sec:gap}

In the previous section, for fixed $m$, $k$, and $r$, we established $N(n,k,m;r)$ when $n\leq m$ and $n\geq (k-1)\binom{m}{r+k-1}$.
In this section, we address the ``gap'' between these results, and establish $N(n,k,m;r)$ for values of $n$ immediately below the latter boundary, as illustrated in Figure~\ref{fig1}.

We begin by identifying $N(n,2,m;r)$ for all possible parameters $n$, $m$, and $r$.
If $n\geq \binom{m}{r+1}$, then by Theorem~\ref{thm:nbddbelow}, $N(n,2,m;r)=(r+2)n-\binom{m}{r+1}$.
Suppose that $n\leq \binom{m}{r+1}$.
Let $A$ be a $m\times n$ matrix whose columns are distinct $(r+1)$-subsets of $[m]$.
Then $A$ is an $r\CBC(n,2,m)$ and $N(A)=(r+1)n$.
So, by Corollary~\ref{cor:bounds}, $N(n,2,m;r)=(r+1)n$.
Having completed the case $k = 2$ we assume that $k\geq 3$ for the remainder of this section. 
The goal of this section is to prove the following theorem:

\begin{theorem}
Let $r\geq 0$, $k\geq 3$, and $m\geq r+k$.  
If 
\begin{equation}
\label{eqn:midinterval}
(k-1)\tbinom{m}{r+k-1}-(m-r-k+1)\cdot F(k,m,r) \leq n \leq (k-1)\tbinom{m}{r+k-1}
\end{equation}
for an appropriate constant $F(k,m,r)\leq \frac{k-1}{r+k-1}\binom{m}{r+k-2}$, then
\[ N(n,k,m;r) = (r+k-1)n - \left\lfloor\dfrac{(k-1)\binom{m}{r+k-1}-n}{m-r-k+1}\right\rfloor.\]
\label{thm:fmkr}
\end{theorem}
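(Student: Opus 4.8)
The plan is to prove matching upper and lower bounds for the claimed value. Throughout I would write $M=\binom{m}{r+k-1}$, $p=m-r-k+1$ (so $p\geq 1$ by the hypothesis $m\geq r+k$, and $p$ is the denominator in the floor), and $D=(k-1)M-n\geq 0$, noting that the stated range for $n$ is exactly $0\leq D\leq pF$. It is convenient to record the identity $\frac{k-1}{r+k-1}\binom{m}{r+k-2}=\frac{(k-1)M}{p+1}$, which recasts the bound on $F$ as $F\leq\frac{(k-1)M}{p+1}$. With $s=\lfloor D/p\rfloor$ the target weight is $(r+k-1)n-s$; the hypothesis forces $s\leq F$, and the endpoint $n=(k-1)M$ (where $s=0$) already agrees with Theorem~\ref{thm:nbddbelow}, so the work is for $n<(k-1)M$.

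For the lower bound I would take an optimal $r\CBC$ and, by Lemma~\ref{lem:types}, assume it is of type (i) or (ii). A type (ii) code has every column of cardinality at least $r+k-1$, hence weight at least $(r+k-1)n\geq (r+k-1)n-s$, so the essential case is a type (i) code $B$ with $|B_j|\leq r+k-1$ for all $j$. Writing $a_j=(r+k-1)-|B_j|\geq 0$, I would double-count the pairs $\{(j,I):B_j\subseteq I,\ |I|=r+k-1\}$. Condition~(iii) of Theorem~\ref{thm:grhc} with $d=r+k-1$ bounds the columns inside each such $I$ by $k-1$, giving $\sum_j\binom{p+a_j}{a_j}\leq (k-1)M$. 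Combined with the elementary inequality $\binom{p+a}{a}\geq 1+pa$ (for $p\geq 1$, $a\geq 0$), this yields $n+p\sum_j a_j\leq (k-1)M$, so $\sum_j a_j\leq D/p$ and, by integrality, $\sum_j a_j\leq s$. Hence the weight $\sum_j|B_j|=(r+k-1)n-\sum_j a_j$ is at least $(r+k-1)n-s$.

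For the upper bound I would build an explicit code from $s$ columns of cardinality $r+k-2$ and $n-s$ columns of cardinality $r+k-1$, whose weight is exactly $(r+k-1)n-s$. Since every column has cardinality at least $r+k-2\geq r+1$ (here $k\geq 3$ is used, which also makes $k-2\geq 1$), verifying Theorem~\ref{thm:grhc}~(iii) reduces to two constraints: each $(r+k-1)$-subset $I$ may contain at most $k-1$ columns, and each $(r+k-2)$-subset may occur at most $k-2$ times among the small columns (automatic if they are chosen distinct). If the small columns are distinct $(r+k-2)$-subsets with the property that no $(r+k-1)$-subset $I$ contains more than $k-1$ of them, then, writing $x_I$ for the count inside $I$, I can distribute the $n-s$ full columns among the blocks $I$ with room $k-1-x_I$ each. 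This is feasible because $\sum_I(k-1-x_I)=(k-1)M-s(p+1)\geq n-s$, which is equivalent to $sp\leq D$ and hence holds for $s=\lfloor D/p\rfloor$.

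The main obstacle is the selection of the small columns: I must exhibit $s$ distinct $(r+k-2)$-subsets of $[m]$ so that every $(r+k-1)$-subset contains at most $k-1$ of them. Averaging shows this requires $s(p+1)\leq (k-1)M$, i.e. $s\leq\frac{(k-1)M}{p+1}$, which is precisely the stated ceiling on $F$; the substance of the theorem is that the averaging bound can be realized \emph{pointwise} up to an appropriate value $F=F(k,m,r)$. I would therefore define $F$ to be the largest number of small columns admitting such a spread-out placement, and the crux of the argument will be a balanced or greedy construction of the $(r+k-2)$-subsets—most naturally by cycling through them using a transitive (e.g.\ cyclic) action on $[m]$, in the spirit of the constructions in Theorems~\ref{thm:tall} and~\ref{thm:maxk}—together with a proof that the containment count never exceeds $k-1$. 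Producing this balanced family, rather than the routine weight bookkeeping or the double-counting lower bound, is where the real difficulty lies.
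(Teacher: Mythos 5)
Your argument is correct and is essentially the paper's own proof. The lower bound coincides with the paper's: your type (i)/(ii) reduction is Lemma~\ref{lem:types}, your double count $\sum_j\binom{p+a_j}{a_j}\leq(k-1)M$ is Lemma~\ref{prop:upperbound}, and your inequality $\binom{p+a}{a}\geq 1+pa$ is exactly the cited Lemma~\ref{lem:tuza} in different notation (your single integrality step at the end is, if anything, cleaner than the paper's term-by-term floor manipulation). The upper bound also coincides: choosing $s=\lfloor D/p\rfloor\leq F$ columns of cardinality $r+k-2$ and filling out with $n-s$ columns of cardinality $r+k-1$, justified by the room count $(k-1)M-s(p+1)\geq n-s$, is precisely the paper's application of Lemma~\ref{prop:extension} to $x=s$ columns taken from a maximum small-column code $P$. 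The one thing to correct is your closing claim that producing the balanced family is ``where the real difficulty lies'': since $F(k,m,r)$ is \emph{defined}---by you, and by the paper's Definition~\ref{def:f}---as the largest size of a family of $(r+k-2)$-subsets with the spread-out property, a family of size $s\leq F$ exists tautologically (the property is inherited by any subfamily of a maximum family), so no cyclic, greedy, or otherwise explicit construction is needed, and none occurs in the paper's proof of Theorem~\ref{thm:fmkr}. Explicit constructions matter only for bounding $F$ from below, i.e., for showing the interval (\ref{eqn:midinterval}) is nonempty and long; the paper defers exactly that question to packing designs, in Construction~\ref{con:design} and Corollary~\ref{cor:ifdesignexists}, after the theorem. (Two harmless divergences: the paper special-cases $m=r+k$ via Theorem~\ref{thm:maxk}, while your argument runs uniformly at $p=1$; and the paper's small columns may repeat up to $k-2$ times where you require distinctness---both choices yield valid ``appropriate constants'' $F$.)
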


Before proving the theorem, we first prove some technical lemmas that are in line with the arguments of Paterson, Stinson, and Wei~\cite{psw} as well as Bujt\'as and Tuza~\cite{tuza2}.
We then define $F(k,m,r)$, and follow with the proof of Theorem \ref{thm:fmkr}.
We conclude the section with some concepts from design theory, then show that the existence of a design with certain parameters significantly reduces the complexity of the lower bound in Theorem~\ref{thm:fmkr}.

The proof method of the following lemma is similar to Theorem~8 of Paterson, Stinson, and Wei~\cite{psw}, which gives an upper bound on the number of columns within a range of cardinalities.
\begin{lemma}
\label{prop:upperbound}
Let $A$ represent an $r$\CBC\ and 
for each $i > r$,
let $\ell_i$ denote the number of columns of $A$ with cardinality $i$.
Then 
\[ \sum_{i=r+1}^{r+k-1}\ell_i\binom{m-i}{r+k-1-i}\leq (k-1)\binom{m}{r+k-1}.\]
\end{lemma}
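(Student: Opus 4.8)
The plan is to prove the inequality by a double-counting argument applied to pairs consisting of a column of $A$ together with an $(r+k-1)$-subset of $[m]$ that contains it. First I would introduce the set
\[
\mathcal{S} = \bigl\{ (j,I) : I \subseteq [m],\ |I| = r+k-1,\ A_j \subseteq I \bigr\},
\]
and evaluate $|\mathcal{S}|$ in two different ways: once by grouping according to the column $A_j$, and once by grouping according to the superset $I$.

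Counting by columns: a fixed column $A_j$ of cardinality $i$ is contained in exactly $\binom{m-i}{r+k-1-i}$ subsets of $[m]$ of cardinality $r+k-1$, since the remaining $r+k-1-i$ coordinates of such a superset are chosen from the $m-i$ coordinates lying outside $A_j$. By Lemma~\ref{lem:minmax} every column of $A$ has cardinality at least $r+1$, and any column contained in some $I$ has cardinality at most $|I| = r+k-1$; hence every column that contributes to $\mathcal{S}$ has cardinality $i$ in the range $r+1 \leq i \leq r+k-1$, and columns outside this range contribute nothing. Grouping by cardinality therefore gives
\[
|\mathcal{S}| = \sum_{i=r+1}^{r+k-1} \ell_i \binom{m-i}{r+k-1-i}.
\]

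Counting by supersets: for a fixed $(r+k-1)$-subset $I$ of $[m]$, I would apply condition~(iii) of Theorem~\ref{thm:grhc} with $d = r+k-1$, which lies in the permitted range $r \leq d \leq r+k-1$. This yields $|\{ j : A_j \subseteq I \}| \leq d - r = k-1$, so each $I$ contributes at most $k-1$ pairs to $\mathcal{S}$. Since there are $\binom{m}{r+k-1}$ subsets $I$ to sum over, we obtain $|\mathcal{S}| \leq (k-1)\binom{m}{r+k-1}$. Combining the two evaluations of $|\mathcal{S}|$ gives the claimed inequality.

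The main obstacle, though a mild one, is recognizing the correct pairing to count: namely, that $\binom{m-i}{r+k-1-i}$ is precisely the number of $(r+k-1)$-supersets of an $i$-element set, and that restricting the summation to cardinalities $i \leq r+k-1$ loses nothing, because a larger column cannot sit inside an $(r+k-1)$-set while Lemma~\ref{lem:minmax} rules out smaller ones. Once this pairing is set up, the upper bound is an immediate consequence of the $d = r+k-1$ instance of Theorem~\ref{thm:grhc}(iii), and no further estimation is needed.
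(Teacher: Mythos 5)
Your proof is correct and follows essentially the same route as the paper's: a double count of the pairs $(j,I)$ with $A_j \subseteq I$ and $|I| = r+k-1$, using Theorem~\ref{thm:grhc}(iii) with $d = r+k-1$ to bound the count per superset by $k-1$, and the binomial count $\binom{m-i}{r+k-1-i}$ per column. If anything, you are slightly more careful than the paper in noting explicitly why columns of cardinality outside $[r+1, r+k-1]$ contribute nothing to the count.
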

\begin{proof}
We count in two ways the number $a$ of pairs $(R,A_j)$ where $R\subseteq[m]$, $|R|=r+k-1$, $j\in[n]$, and $A_j\subseteq R$.
By Theorem~\ref{thm:grhc}, every $(r+k-1)$-subset $R$ of $[m]$ contains at most $k-1$ columns of $A$.
Therefore $a\leq (k-1)\binom{m}{r+k-1}$.
Furthermore, for each column of $A$ with cardinality $i$, there are $\binom{m-i}{r+k-1-i}$ $(r+k-1)$-subsets $R$ of $[m]$ containing it.
Therefore 
\[ a = \sum_{\substack{R\subseteq[m]\\|R|=r+k-1}}
|\{j:A_j\subseteq R\}| = \sum_{i=r+1}^{r+k-1}\ell_i\binom{m-i}{r+k-1-i}.\]
The result follows.
\end{proof}

The next result identifies the maximum number of columns of cardinality $r+k-1$ that can be appended to an $r$\CBC\ to obtain a larger $r$\CBC.

\begin{lemma}
\label{prop:extension}
Let $A$ represent an $r\CBC(n,k,m)$.
Then $A$ can be extended to an $r\CBC(n+b,k,m)$ $A'$ with $b$ additional columns of cardinality $r+k-1$ if and only if 
\begin{equation}
b \leq (k-1)\binom{m}{r+k-1} - \sum_{i=r+1}^{r+k-1}\ell_i\binom{m-i}{r+k-1-i},
\label{eqn:prop3}
\end{equation}
where for each $i>r$, $\ell_i$ denotes the number of columns in $A$ of cardinality $i$.
\end{lemma}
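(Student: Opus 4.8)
The plan is to prove both directions using condition~(iii) of Theorem~\ref{thm:grhc} as the working characterization of an $r\CBC$, together with the double-counting identity already established in Lemma~\ref{prop:upperbound}. The single structural observation that drives everything is this: a newly appended column of cardinality $r+k-1$ can be contained in a $d$-subset $I$ of $[m]$ with $r \le d \le r+k-1$ only when $d = r+k-1$ and $I$ equals that column itself, since for $d < r+k-1$ no set of cardinality $r+k-1$ fits inside a set of cardinality $d$. Thus appending columns of cardinality $r+k-1$ can only threaten condition~(iii) at the top level $d = r+k-1$, and there only for the subset $I$ that coincides with the appended column.

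For the forward direction, I would apply Lemma~\ref{prop:upperbound} directly to the extended matrix $A'$. Writing $a = \sum_{i=r+1}^{r+k-1}\ell_i\binom{m-i}{r+k-1-i}$ for the quantity computed from $A$, each of the $t$ new columns of cardinality $r+k-1$ contributes exactly $\binom{m-(r+k-1)}{0} = 1$ to the corresponding sum for $A'$. Hence the analogous quantity for $A'$ equals $a + t$, and Lemma~\ref{prop:upperbound} applied to $A'$ forces $a + t \le (k-1)\binom{m}{r+k-1}$, which rearranges to exactly the claimed inequality~(\ref{eqn:prop3}).

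For the reverse direction, suppose~(\ref{eqn:prop3}) holds. For each $(r+k-1)$-subset $I$ of $[m]$, set $\beta_I = |\{j : A_j \subseteq I\}|$; since $A$ is an $r\CBC$, condition~(iii) gives $\beta_I \le k-1$, so $I$ admits $(k-1) - \beta_I \ge 0$ further columns equal to $I$ before condition~(iii) is violated at $I$. By the observation above, appending a copy of $I$ leaves $|\{j : A'_j \subseteq I'\}|$ unchanged for every other subset $I'$ relevant to condition~(iii), so these per-subset capacities are independent and may be filled freely. The total capacity is $\sum_I \big((k-1) - \beta_I\big) = (k-1)\binom{m}{r+k-1} - \sum_I \beta_I = (k-1)\binom{m}{r+k-1} - a$, where the last equality is precisely the double count of Lemma~\ref{prop:upperbound}, namely $\sum_I \beta_I = a$. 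Since~(\ref{eqn:prop3}) asserts that $t$ is at most this capacity, I can append $t$ new columns, each an $(r+k-1)$-subset chosen so as not to exceed its own capacity, and the result satisfies condition~(iii) at every level $d$, hence represents an $r\CBC(n+t,k,m)$ by Theorem~\ref{thm:grhc}.

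The main obstacle will be organizing the reverse direction cleanly rather than any deep difficulty: one must verify that condition~(iii) is never endangered at levels $d < r+k-1$ (immediate, since the new columns are too large to be contained) and that the capacities at distinct $(r+k-1)$-subsets are genuinely independent, so that the greedy filling of $t$ columns is valid. Both points reduce to the single containment observation, so once that is stated the remaining verification is routine bookkeeping.
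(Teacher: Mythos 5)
Your proposal is correct and follows essentially the same route as the paper: the forward direction applies Lemma~\ref{prop:upperbound} to the extended matrix (each new column contributing $\binom{m-(r+k-1)}{0}=1$), and the reverse direction computes the per-subset capacities $(k-1)-\beta_I$, sums them via the same double count, and fills them greedily, which is exactly the paper's $t_C$ argument. If anything, you make explicit the independence/containment observation (that an appended $(r+k-1)$-column can only affect the count at the subset equal to itself) that the paper leaves implicit, and you correctly dispense with the paper's unnecessary preliminary reduction to matrices with no column larger than $r+k-1$.
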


\begin{proof}
It is sufficient to show the result holds when $A$ has no column with cardinality greater than $r+k-1$.

Suppose such an extension is possible and let $\ell_i'$, for $r+1\leq i\leq r+k-1$, denote the number of columns in $A'$ of cardinality $i$.
Observe that $\ell_{r+k-1}'=\ell_{r+k-1}+b$ and $\ell_i'=\ell_i$ for all other~$i$.
By Lemma \ref{prop:upperbound}, we have that 
\[ \sum_{i=r+1}^{r+k-1}\ell_i'\binom{m-i}{r+k-1-i} \leq (k-1)\binom{m}{r+k-1} \]
and therefore 
\[ 
 b + \sum_{i=r+1}^{r+k-1}\ell_i\binom{m-i}{r+k-1-i}\leq (k-1)\binom{m}{r+k-1}. \]

Now suppose that (\ref{eqn:prop3}) holds.
Let $\mathcal{C}$ be the set of all possible columns of cardinality $r+k-1$.
Let $C\in\mathcal{C}$.  
Since there are at most $k-1$ columns of $A$ contained in $C\in \mathcal{C}$, we can define $b_C \geq0$ so that there are $k-1-b_C$ columns of $A$ contained in $C$.
Hence we can append up to $b_C$ copies of $C$ to $A$, and the resulting matrix will be an $r\CBC$.
Then the lemma follows if we show that $b\leq \sum_{C\in\mathcal{C}}b_C$.

Recall that each column of $A$ with cardinality $i$ is contained in $\binom{m-i}{r+k-i-1}$ columns of $\mathcal{C}$.
Therefore by an argument similar to the one above,
\[\sum_{i=r+1}^{r+k-1}\ell_i\binom{m-i}{r+k-1-i} 
= \sum_{C\in\mathcal{C}}(k-1-b_C) 
= (k-1)\binom{m}{r+k-1}-\sum_{C\in\mathcal{C}}b_C.\]
The result follows.
\end{proof} 

We will require the number-theoretic result given in Lemma~1 of Bujt\'as and Tuza~\cite{tuza3}, which we restate here.

\begin{lemma}[Bujt\'as and Tuza~\cite{tuza3}]
\label{lem:tuza}
For any three integers $i$, $p$, and $m$ satisfying $1\leq i\leq p\leq m-1$, the following inequality holds:
\[\left\lfloor \dfrac{\binom{m-i}{p-i}-1}{m-p}\right\rfloor \geq p-i.\]
\end{lemma}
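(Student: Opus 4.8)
The plan is to reduce the floor inequality to an elementary binomial inequality and then prove the latter by a direct counting argument. First I would observe that the right-hand side $p-i$ is an integer, and that for any real $x$ and integer $N$ one has $\lfloor x\rfloor\geq N$ if and only if $x\geq N$. Since the hypotheses give $m-p\geq 1>0$, clearing the positive denominator shows that the asserted inequality $\left\lfloor\frac{\binom{m-i}{p-i}-1}{m-p}\right\rfloor\geq p-i$ is equivalent to
\[
\binom{m-i}{p-i}\geq (p-i)(m-p)+1.
\]
Setting $a=p-i$ and $b=m-p$, the constraints $1\leq i\leq p\leq m-1$ give $a\geq 0$ and $b\geq 1$, while $m-i=a+b$. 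The statement to prove becomes
\[
\binom{a+b}{a}\geq ab+1
\]
for integers $a\geq 0$ and $b\geq 1$.

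Next I would prove this binomial inequality combinatorially by exhibiting $ab+1$ distinct $a$-element subsets of the ground set $[a+b]$, of which $\binom{a+b}{a}$ counts the total. Partition $[a+b]$ into a first block $\{1,\dots,a\}$ and a second block $\{a+1,\dots,a+b\}$. Take the base subset $S_0=\{1,\dots,a\}$, and for each pair $(s,t)$ with $s\in\{1,\dots,a\}$ and $t\in\{a+1,\dots,a+b\}$ form the swap $S_{s,t}=(S_0\setminus\{s\})\cup\{t\}$. Each $S_{s,t}$ contains exactly $a-1$ elements of the first block and exactly one element of the second block, whereas $S_0$ contains all $a$ first-block elements and none of the second; hence every swap differs from $S_0$. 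Moreover $(s,t)\mapsto S_{s,t}$ is injective, since from $S_{s,t}$ one recovers $s$ as the unique absent first-block element and $t$ as the unique present second-block element. Thus the $ab$ swaps are pairwise distinct and all distinct from $S_0$, yielding at least $ab+1$ subsets of size $a$. The degenerate case $a=0$ (that is, $i=p$) is handled separately: the construction tacitly uses $a\geq 1$ to have a first-block element available to remove, and when $a=0$ the inequality reads $\binom{b}{0}=1\geq 1$, which holds; the hypothesis $b\geq 1$ simultaneously guarantees the second block is nonempty (so the swaps exist) and, in the reduction, licenses clearing the denominator.

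I do not anticipate a substantive obstacle, since the content is a clean counting inequality. The only points requiring genuine care are the floor reduction — specifically invoking $\lfloor x\rfloor\geq N\Leftrightarrow x\geq N$ for integer $N$, rather than a weaker estimate — and verifying that the exhibited subsets are truly distinct, namely the injectivity of $(s,t)\mapsto S_{s,t}$ together with their separation from the base subset $S_0$.
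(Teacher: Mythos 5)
Your proof is correct, and there is an important point of comparison to make: the paper does not prove this lemma at all --- it is imported verbatim as Lemma~1 of Bujt\'as and Tuza~\cite{tuza3}, with the reader referred to that source. So your argument is not a variant of an internal proof but a self-contained replacement for an external citation, and it holds up. The reduction step is sound: since $p-i$ is an integer and $m-p\geq 1$, the equivalence $\lfloor x\rfloor\geq N\Leftrightarrow x\geq N$ for integer $N$ (and not merely the one-sided implication) lets you clear the denominator and convert the claim into $\binom{a+b}{a}\geq ab+1$ with $a=p-i\geq 0$ and $b=m-p\geq 1$, where $m-i=a+b$. The counting step is also sound: each swap $S_{s,t}=\left(S_0\setminus\{s\}\right)\cup\{t\}$ meets the second block in exactly one element while $S_0$ meets it in none, so every swap differs from $S_0$, and the map $(s,t)\mapsto S_{s,t}$ is injective because $s$ and $t$ are recoverable from $S_{s,t}$; this exhibits $ab+1$ distinct $a$-subsets of $[a+b]$. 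You were right to flag the degenerate case $a=0$ (i.e., $i=p$), where the swap construction is vacuous but the inequality reads $\binom{b}{0}=1\geq 1$; handling it separately closes the only gap the construction could have had. Any proof of the lemma must ultimately come down to this binomial inequality, and your injection is about as elementary a route to it as one could ask for, so for the purposes of this paper your proposal makes the borrowed lemma self-contained.
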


We now define $F(k,m,r)$, prove existence of an upper bound on $F(k,m,r)$, and provide and example for computing $F(k,m,r)$.

\begin{definition}\label{def:f}
For parameters $k\geq 3$, $r\geq 0$, and $m\geq r+k$, let $F(k,m,r)$ be the largest $n$ such that an $r\CBC(n,k,m)$ exists in which each column has cardinality $r+k-2$.
Such an $r\CBC(n,k,m)$ and $F(k,m,r)$ are closely related to \defn{packing designs} and \defn{packing numbers}~\cite{millsmullin}, which will be discussed at the end of the section.
\end{definition}

\begin{example}
\label{ex:fmkr1}
Let $m=5$, $r=1$, and $k=3$.
A $1\CBC(6,3,5)$ exists, as shown in in Figure \ref{fig:fmkr}.
Observe that each column contains $r+k-2=2$ 1s.
Hence $F(3,5,1)\geq 6$.
\end{example}

\begin{figure}
\[\begin{bmatrix}
0&0&0&0&1&1 \\
0&0&1&1&0&1 \\
0&1&0&1&0&0 \\
1&0&1&0&0&0 \\
1&1&0&0&1&0 \\
\end{bmatrix}\]
\caption{A $1\CBC(6,3,5)$ illustrating that $F(3,5,1)\geq 6$. See Example \ref{ex:fmkr1}.}
\label{fig:fmkr}
\end{figure}

\begin{lemma}
Letting $F(k,m,r)$ be as in Definition~\ref{def:f}, we have 
\[F(k,m,r) \leq \dfrac{k-1}{r+k-1}\dbinom{m}{r+k-2}.\]
\label{lem:fmkr_inequality}
\end{lemma}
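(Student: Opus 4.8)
The plan is to apply Lemma~\ref{prop:upperbound} to an extremal $r\CBC$ realizing $F(k,m,r)$. By Definition~\ref{def:f}, fix an $r\CBC(n,k,m)$ $A$ with $n = F(k,m,r)$ in which every column has cardinality exactly $r+k-2$. In the notation of Lemma~\ref{prop:upperbound} this means $\ell_{r+k-2} = n$ and $\ell_i = 0$ for every other $i$. Note that $r+1 \le r+k-2 \le r+k-1$ precisely because $k \ge 3$, so the single surviving term indeed lies inside the range of the sum.

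Specializing the inequality of Lemma~\ref{prop:upperbound} to this situation collapses the sum to its one nonzero term, giving $n\binom{m-(r+k-2)}{r+k-1-(r+k-2)} \le (k-1)\binom{m}{r+k-1}$. Since $\binom{m-(r+k-2)}{1} = m-r-k+2$, this reads $n\,(m-r-k+2) \le (k-1)\binom{m}{r+k-1}$. Dividing through by $m-r-k+2$, which is positive because $m \ge r+k > r+k-2$, yields an upper bound on $n = F(k,m,r)$.

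The last step is to rewrite the right-hand side in the claimed form. I would use the elementary identity $\binom{m}{r+k-1} = \frac{m-r-k+2}{r+k-1}\binom{m}{r+k-2}$, which is the instance $p = r+k-1$ of $\binom{m}{p}/\binom{m}{p-1} = (m-p+1)/p$. Substituting this into the bound cancels the factor $m-r-k+2$ and produces exactly $F(k,m,r) \le \frac{k-1}{r+k-1}\binom{m}{r+k-2}$, as desired.

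I do not expect a genuine obstacle here: the result is essentially the concentrated special case of Lemma~\ref{prop:upperbound} followed by a binomial identity. The only points requiring care are checking that the relevant index stays in the admissible range (which is where the hypothesis $k \ge 3$ enters) and confirming the identity, after which the inequality is immediate. Should a self-contained argument be preferred, the same bound follows by double counting the pairs $(R, A_j)$ with $R \subseteq [m]$, $|R| = r+k-1$, and $A_j \subseteq R$: each column of cardinality $r+k-2$ lies in exactly $m-r-k+2$ such sets $R$, while each $R$ contains at most $k-1$ columns of $A$ by Theorem~\ref{thm:grhc}~(\ref{grhc2}) with $d = r+k-1$.
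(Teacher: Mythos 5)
Your proof is correct and is essentially the same argument as the paper's: the paper directly double-counts the pairs $(C,I)$ with $C$ a column of the extremal code and $I$ an $(r+k-1)$-superset of $C$, which is exactly the counting that underlies Lemma~\ref{prop:upperbound}, so specializing that lemma to $\ell_{r+k-2}=n$ (valid since $k\geq 3$ keeps the index in range) and applying the identity $\binom{m}{r+k-1}=\frac{m-r-k+2}{r+k-1}\binom{m}{r+k-2}$ reproduces the paper's computation. Your invocation of the lemma is a mild streamlining rather than a different route, and your closing remark about the self-contained double count is literally the paper's proof.
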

\begin{proof}
Let $P$ be an $r\CBC(n,k,m)$ with $n=F(k,m,r)$ and suppose the columns of $P$ have cardinality $r+k-2$.
We enumerate the set 
\[ \mathcal{C}=\{(C,I)\mid C\mbox{ is a column of $P$}, C\subseteq I\subseteq[m], |I|=r+k-1\} \]
in two ways.
For any given column $C$ of $P$, we have that $|C|=r+k-2$ and so there are $m-(r+k-2)$ possible subsets $I\subseteq[m]$ for which $|I|=r+k-1$ and $C\subset I$.
Therefore $|\mathcal{C}| = F(k,m,r)\cdot(m-r-k+2)$.

For any given $I\subseteq[m]$ with $|I|=r+k-1$, since $P$ is an $r\CBC(n,k,m)$, there are at most $k-1$ columns $C$ of $P$ for which $C\subset I$.
Therefore $|\mathcal{C}| \leq \binom{m}{r+k-1}\cdot (k-1)$.
%
So 
$F(k,m,r)\cdot (m-(r+k-2)) \leq \binom{m}{r+k-1}\cdot (k-1).$
The result follows.
\end{proof}

\begin{example}
\label{ex:fmkr2}
Let $m=5$, $r=1$, and $k=3$.
From Lemma \ref{lem:fmkr_inequality}, observe that $F(3,5,1)\leq 20/3 < 7$.
Therefore, since we showed that $F(3,5,1)\geq 6$ in Example \ref{ex:fmkr1}, it follows that $F(3,5,1)=6$.
\end{example}

We now prove Theorem \ref{thm:fmkr}, letting $F(k,m,r)$ take the value from
Definition~\ref{def:f}.
%
\begin{proof}[Proof of Theorem \ref{thm:fmkr}]
Suppose that $m=r+k$.
Then $N(n,k,m;r)=mn-m(m-r-1)$ by Theorem~\ref{thm:maxk}, and our formula gives 
\[\begin{array}{rcl}
N &=& (m-1)n - \left\lfloor(k-1)m-n\right\rfloor\\
&=& mn-km+m \\
&=& mn-(m-r)m+m \\
&=& mn-(m-r-1)m. \\
\end{array}\]
So the formula holds.
Now assume that $m>r+k$.
Let $P$ be an $r\CBC(n,k,m)$ with $n=F(k,m,r)$ and suppose the columns of $P$ have cardinality $r+k-2$.
Let $\mathcal{C}$ be the set of all columns contained in $[m]$ with cardinality $r+k-1$.
Let $$x=\left\lfloor\dfrac{(k-1)\binom{m}{r+k-1}-n}{m-r-k+1}\right\rfloor.$$
It follows from the restriction on $n$ that $0\leq x \leq F(k,m,r)$.
Let $B$ be an $r\CBC(x,k,m)$ consisting of $x$ columns from $P$.
By Lemma~\ref{prop:extension}, $B$ can be extended by appending up to $(k-1)\binom{m}{r+k-1} - x(m-r-k+2)$ columns 
from $\mathcal{C}$, and 

\begin{align*}
&(k-1)\binom{m}{r+k-1} - x(m-r-k+2){\rule[-15pt]{0pt}{0pt}}\\
&= (k-1)\binom{m}{r+k-1} - \left\lfloor\dfrac{(k-1)\binom{m}{r+k-1}-n}{m-r-k+1}\right\rfloor(m-r-k+2){\rule[-25pt]{0pt}{0pt}}\displaybreak[0]\\
&\geq (k-1)\binom{m}{r+k-1} - \left\lfloor\dfrac{(k-1)\binom{m}{r+k-1}-n}{m-r-k+1}+(k-1)\binom{m}{r+k-1}-n\right\rfloor{\rule[-25pt]{0pt}{0pt}}\displaybreak[0]\\
&= (k-1)\binom{m}{r+k-1} - \left\lfloor\dfrac{(k-1)\binom{m}{r+k-1}-n}{m-r-k+1}\right\rfloor-(k-1)\binom{m}{r+k-1}+n{\rule[-15pt]{0pt}{0pt}}\displaybreak[0]\\
&= n-x.
\end{align*}
Let $B'$ be an extension of $B$ obtained by appending the appropriate $n-x$ columns from $\mathcal{C}$.
Then $N(B')=(r+k-1)n-x$ and hence $N(n,k,m;r)\leq (r+k-1)n-x$.

Let $A$ be an optimal $r\CBC(n,k,m)$.
In what follows, we show that $N(A)\geq (r+k-1)n - x$.
By Lemma~\ref{lem:types}, we may assume $A$ has that all of its columns are of cardinality at most $r+k-1$ or all of its columns have cardinality $r+k-1$ or $r+k$.
If $A$ is of the latter type, then $|A_j|\geq r+k-1$ for each $j\in[n]$, and therefore $N(A)\geq(r+k-1)n$.

Suppose now that each column of $A$ has cardinality at most $r+k-1$; that is $r+1\leq |A_j|\leq r+k-1$ for each $j\in[n]$.
It is sufficient to show that
\[ N(A) = \sum_{i=r+1}^{r+k-1} i\ell_i = (r+k-1)n -\sum_{i=r+1}^{r+k-1}(r+k-1-i)\ell_i \geq (r+k-1)n-x,\]
where again $\ell_i$ $(r+1\leq i\leq r+k-1)$ denotes the number of columns of $A$ with cardinality $i$.
Hence it is sufficient to show that
\begin{equation}
\label{eqn:lowerbound}
\sum_{i=r+1}^{r+k-1}(r+k-1-i)\ell_i \leq x.
\end{equation}
By Lemma~\ref{prop:upperbound}, we have that 
\[\sum_{i=r+1}^{r+k-1}\ell_i\binom{m-i}{r+k-1-i} \leq (k-1)\binom{m}{r+k-1}.\]
Since $\ell_{r+k-1} = n - (\ell_{r+1} + \cdots + \ell_{r+k-2})$, we can substitute:
\[ n-(\ell_{r+1} + \cdots + \ell_{r+k-2}) + \sum_{i=r+1}^{r+k-2}\ell_i\binom{m-i}{r+k-1-i} \leq (k-1)\binom{m}{r+k-1}.\]
We can move the $n$ over to the right side of the inequality, incorporate the $\ell_i$s in the summation, and divide both sides by $m-r-k+1$ (which is at least $1$):
\[ \sum_{i=r+1}^{r+k-2}\ell_i\left(\dfrac{\dbinom{m-i}{r+k-1-i}-1}{m-r-k+1}\right) \leq \dfrac{(k-1)\dbinom{m}{r+k-1}-n}{m-r-k+1}.\]
Taking the floor on the left for each term will produce something smaller than the floor of the term on the right, and the result follows.
\end{proof}

Observe that the lower bound for the values of $n$ in Theorem \ref{thm:fmkr} is dependent $F(k,m,r)$.
In what follows, we associate $F(k,m,r)$ to maximal packings with appropriate parameters and, in the case when a design exists with certain properties, we significantly simplify the bounds on $n$ in Theorem~\ref{thm:fmkr}.
We now classify when equality is reached in Lemma \ref{lem:fmkr_inequality}, which in turn minimizes and simplifies the lower bound of the interval (\ref{eqn:midinterval}) in Theorem \ref{thm:fmkr}.

\begin{definition}
Let $X$ be a set and let $\mathcal{B}$ be a family of subsets of $X$.
Recall that the ordered pair $(X,\mathcal{B})$ is a $t\dash(v,k,\lambda)$ \defn{design} if $|X|=v$, $|B|=k$ for each $B\in\mathcal{B}$, and  each $t$-subset of $X$ is contained in exactly $\lambda$ sets in $\mathcal{B}$.
Moreover, in such a design, it is known that $|\mathcal{B}|=\binom{v}{t}\cdot\lambda/\binom{k}{t}$. See  Khosrovshahi and Laue~\cite{tdesign} for more information about $t$-designs.
\end{definition}

\begin{definition}
Let $X$ be a set and let $\mathcal{B}$ be a family of subsets of $X$.
A $t\dash(v,k,\lambda)$ \defn{packing design} if $|X|=v$, $|B|=k$ for each $B\in\mathcal{B}$, and  each $t$-subset of $X$ is contained in \defn{at most} $\lambda$ sets in $\mathcal{B}$.
The \defn{packing number} $D_\lambda(v,k,t)$ is the number of blocks in a maximum $t\dash(v,k,\lambda)$ packing design.
Therefore $D_\lambda(v,k,t) \leq \binom{v}{t}\cdot\lambda/\binom{k}{t}$ with equality when a $t\dash(v,k,\lambda)$ design exists.
For more information on packing designs, see Mills and Mullin~\cite{millsmullin}.
\end{definition}

There is a connection between an $r\CBC$ and the complement of a packing design with appropriate parameters satisfying an additional property.

\begin{construction}
\label{con:design}
Let $g=m-(r+k)$ and $\mathcal{D}$ be a maximal $(g+1)\dash(m,g+2,k-1)$ packing design with vertex set $[m]$, block set $\mathcal{B}$, with the additional property (P):
\begin{quote}
 (P): no block in $\mathcal{B}$ appears more than $k-2$ times.
 \end{quote}
Observe that $|\mathcal{B}| \leq \frac{k-1}{r+k-1}\binom{m}{r+k-2}$.
Let $A$ be a matrix whose columns are the complements of the sets in $\mathcal{B}$, and thus each of the columns of $A$ has cardinality $r+k-2$.
\end{construction}

\begin{example}
\label{ex:fmkr3}
Let $r=1$, $k=3$, and $m=5$.
So $g=1$ and a maximal $2\dash(5,3,2)$ packing design has at most $6$ blocks.
See Figure \ref{fig:packingdesign} for a packing design $A$ which achieves this maximum.
Observe that this design is the complement of the $1\CBC(6,3,5)$ given in Example \ref{ex:fmkr1}.
\end{example}

\begin{figure}
\[\begin{bmatrix}
1&1&1&1&0&0\\
1&1&0&0&1&0\\
1&0&1&0&1&1\\
0&1&0&1&1&1\\
0&0&1&1&0&1\\
\end{bmatrix}\]
\caption{A maximal $2\dash(5,3,2)$ packing design.}
\label{fig:packingdesign}
\end{figure}

\begin{lemma}
\label{lemma:design_rcbc}
The matrix $A$ in Construction \ref{con:design} is an $r\CBC$.
Furthermore, the number of columns in $A$ is at most $F(k,m,r)$.
\end{lemma}

\begin{proof}
Suppose that $A$ has $n$ columns, let $1\leq c\leq k$, and let $J\subseteq[n]$ have cardinality $c$.
If $c\leq k-2$, then $\left |\bigcup_{j\in J}A_j \right| \geq r+k-2\geq r+c$.
If $c=k-1$ then, by property ($P$), not all the sets $A_j$ ($j\in J$) are equal, so $|\bigcup_{j\in J}A_j|\geq r+k-1$.
Therefore, to prove $A$ is an $r$\CBC, we need to show that if $|J|=k$, then $\left |\bigcup_{j\in J}A_j \right |\geq r+k$.

Assume that $A$ is not an $r\CBC$.
Then there exists $J\subseteq[n]$ of cardinality $k$ such that $\left |\bigcup_{j\in J}A_j \right|< r+k$.
Let $B_j\in\mathcal{B}$ ($j\in J$) be the complements of $A_j$.
Then $\left |\bigcap_{j\in J}B_j  \right| > m-(r+k)$, so $ \left |\bigcap_{j\in J}B_j \right| \geq g+1$.
So there exists a $(g+1)$-subset of $[m]$ which is contained in $k$ blocks of a $(g+1)\dash(m,g+2,k-1)$ packing design, which is a contradiction.
Therefore $\left |\bigcup_{j\in J}A_j\right |\geq r+k$ for every $k$-subset $J$ of~$[m]$.
Hence $A$ is an $r\CBC$, and since $|A_j|=r+k-2$ for each $j\in[n]$, we have that $n\leq F(k,m,r)$.
\end{proof}

In fact, since $\mathcal{B}$ is a maximal packing design, we have that $|\mathcal{B}|=F(k,m,r)$:

\begin{lemma}
Let $A$ be an $r\CBC$ which realizes $F(k,m,r)$ as given in Definition \ref{def:f}.
Then $\mathcal{B} = \{A_i^c\mid A_i\text{ is a column of }A\}$ is a $(g+1)$-$(m,g+2,k-1)$ packing design satisfying property (P).
Therefore a maximal $(g+1)$-$(m,g+2,k-1)$ packing design has $F(k,m,r)$ blocks.
\end{lemma}

\begin{proof}
First observe for each $A_i^c\in \mathcal{B}$ that $|A_i^c| = g+2$.
Let $X\subseteq [m]$ and $|X|=g+1$.
Then $|X^c|=r+k-1$.
Since $A$ is an $r\CBC$, there exist at most $k-1$ columns of $A$ which are contained by $X^c$.
Hence there are at most $k-1$ sets in $\mathcal{B}$ which contain $X$.
So $\mathcal{B}$ is a packing design.

Assume that $\mathcal{B}$ has $k-1$ identical sets.
Then $A$ contains $k-1$ identical columns.
This implies the existence of a set of $k-1$ columns whose union has cardinality $r+k-2$, however 
since $A$ is an $r\CBC(n,k,m)$, this union should have cardinality at least $r+k-1$, giving a contradiction.
So $\mathcal{B}$ must satisfy property (P).
It follows from Lemma \ref{lemma:design_rcbc} that a maximal $(g+1)$-$(m,g+2,k-1)$ packing design has $F(k,m,r)$ blocks.
\end{proof}

If the maximum packing design used in Construction \ref{con:design} is, in fact, a design, then equality is achieved in Lemma \ref{lem:fmkr_inequality}
and we can restate Theorem~\ref{thm:fmkr} in a simplified form, which generalizes a result of Bujt\'as and Tuza. 

\begin{corollary}
\label{cor:ifdesignexists}
Let $r\geq 0$, $k\geq 3$, $m\geq r+k$, $g=m-(r+k)$, and suppose there exists a $(g+1)\dash(m,g+2,k-1)$ design with property $(P)$.
If \[\frac{k-1}{r+k-1}\binom{m}{r+k-2}\leq n \leq (k-1)\binom{m}{r+k-1},\]
 then 
\[ N(n,k,m;r) = (r+k-1)n - \left\lfloor\dfrac{(k-1)\binom{m}{r+k-1}-n}{m-r-k+1}\right\rfloor.\]
\end{corollary}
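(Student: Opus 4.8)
The plan is to deduce the corollary from Theorem~\ref{thm:fmkr} by showing that the existence of a genuine design (rather than a mere packing) with property~$(P)$ forces equality in Lemma~\ref{lem:fmkr_inequality}. Once $F(k,m,r) = \frac{k-1}{r+k-1}\binom{m}{r+k-2}$, the left endpoint of the interval~(\ref{eqn:midinterval}) collapses to exactly the lower bound stated in the corollary, and the formula for $N(n,k,m;r)$ is then inherited verbatim from the theorem.

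First I would use the design to witness the extremal value of $F(k,m,r)$. Feeding the given $(g+1)$-$(m,g+2,k-1)$ design into Construction~\ref{con:design} produces a matrix $A$ all of whose columns have cardinality $r+k-2$, and Lemma~\ref{lemma:design_rcbc} guarantees that $A$ is an $r\CBC$ (this is exactly where property~$(P)$ is needed, to handle the case $|J| = k-1$). Because we now have a design, every $(g+1)$-subset of $[m]$ lies in \emph{exactly} $k-1$ blocks, so $|\mathcal{B}| = \binom{m}{g+1}(k-1)/\binom{g+2}{g+1}$. Using $g+1 = m-r-k+1$, so that $\binom{m}{g+1} = \binom{m}{r+k-1}$, together with $\binom{g+2}{g+1} = m-r-k+2$, this count rewrites as $\frac{k-1}{r+k-1}\binom{m}{r+k-2}$. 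Since $F(k,m,r)$ is by definition the largest number of cardinality-$(r+k-2)$ columns in such an $r\CBC$, this yields $F(k,m,r) \geq \frac{k-1}{r+k-1}\binom{m}{r+k-2}$, which together with the reverse inequality of Lemma~\ref{lem:fmkr_inequality} gives equality.

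Next I would substitute this value into the left endpoint $(k-1)\binom{m}{r+k-1} - (m-r-k+1)F(k,m,r)$ of~(\ref{eqn:midinterval}). Applying the identity $\binom{m}{r+k-1} = \frac{m-r-k+2}{r+k-1}\binom{m}{r+k-2}$ reduces the subtracted term to $\frac{(m-r-k+1)(k-1)}{m-r-k+2}\binom{m}{r+k-1}$, and the endpoint then simplifies to $\frac{k-1}{r+k-1}\binom{m}{r+k-2}$. Hence the hypothesis $\frac{k-1}{r+k-1}\binom{m}{r+k-2} \leq n \leq (k-1)\binom{m}{r+k-1}$ of the corollary is identical to the hypothesis~(\ref{eqn:midinterval}) of Theorem~\ref{thm:fmkr} for this $F$, and the stated formula for $N(n,k,m;r)$ is then immediate from that theorem.

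The main obstacle is modest: it is the binomial bookkeeping that must align the block count of the design, the value of $F(k,m,r)$, and the collapsed interval endpoint to the single expression $\frac{k-1}{r+k-1}\binom{m}{r+k-2}$, since the two intervals coincide only if all three match exactly. I would also briefly confirm that the degenerate case $m = r+k$ (where $g = 0$ and Theorem~\ref{thm:fmkr} was settled separately via Theorem~\ref{thm:maxk}) remains consistent with this design-based count.
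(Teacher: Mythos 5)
Your proposal is correct and follows essentially the same route as the paper: Construction~\ref{con:design} and Lemma~\ref{lemma:design_rcbc} are set up precisely so that a genuine design with property $(P)$ forces equality in Lemma~\ref{lem:fmkr_inequality}, giving $F(k,m,r)=\frac{k-1}{r+k-1}\binom{m}{r+k-2}$, which collapses the left endpoint of~(\ref{eqn:midinterval}) to the corollary's lower bound and lets Theorem~\ref{thm:fmkr} apply verbatim. Your explicit binomial bookkeeping, via $\binom{m}{r+k-1}=\frac{m-r-k+2}{r+k-1}\binom{m}{r+k-2}$, simply supplies the details the paper leaves implicit.
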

Let $r=0$ and $\mathcal{B}$ be the set of all $(g+2)$-subsets of $[m]$.
Then $([m],\mathcal{B})$ is a $(g+1)$-$(m,g+2,k-1)$ design with property $(P)$ (in fact, the design is simple -- each block is distinct).
Therefore the hypotheses of Corollary~\ref{cor:ifdesignexists} are satisfied.
This shows that Theorem~1 of Bujt\'as and Tuza~\cite{tuza3} is a special case of Corollary~\ref{cor:ifdesignexists}.

\section{Achieving the trivial minimum}\label{sec:minimum}

We close with an inverse problem involving $r\CBC$s.

In Corollary \ref{cor:bounds}, we establish for all admissible parameters $n,k,m,r$ that $N(n,k,m;r) \geq (r+1)n$.
If this lower bound is achieved, then the associated matrix of such an $r\CBC$\ would contain exactly $r+1$ 1s in each column.
Hence, in a combinatorial batch code, each item must be stored on at least one server, and therefore $N(n,k,m;0)\geq n$.
In fact, the only combinatorial batch codes for which $N(n,k,m;0)=n$ are those for which $k=1$ (store each item on the same server) or $k\geq 2$ and $n\leq m$ (each item gets its own server).
However, when $r\geq 1$, this classification is nontrivial -- there exist parameters such that $k\geq 2$, $n>m$, and an $r\CBC(n,k,m)$ exists for which $N(n,k,m;r)=(r+1)n$.
See Figure \ref{fig:minCBC}~(a) for a $1\CBC(7,3,6)$ having weight $2n$.
In this section we find, for prescribed integers $m$, $k$, and $r$, the maximum number of items $n$ for which an erasure combinatorial batch code with redundancy $r$ exists that achieves this minimal weight.

\begin{figure}
\centering\begin{tabular}{cc}
$\begin{bmatrix}
1 & 0 & 0 & 0 & 0 & 1 & 1 \\
1 & 1 & 0 & 0 & 0 & 0 & 0 \\
0 & 1 & 1 & 0 & 0 & 0 & 0 \\
0 & 0 & 1 & 1 & 0 & 0 & 1 \\
0 & 0 & 0 & 1 & 1 & 0 & 0 \\
0 & 0 & 0 & 0 & 1 & 1 & 0 \\
\end{bmatrix}$
&
\raisebox{-.5\height}{\scalebox{.8}{\begin{tikzpicture}
\node[circle,draw] (1) at  (60:2) {$1$};
\node[circle,draw] (2) at  ( 0:2) {$2$};
\node[circle,draw] (3) at (300:2) {$3$};
\node[circle,draw] (4) at (240:2) {$4$};
\node[circle,draw] (5) at (180:2) {$5$};
\node[circle,draw] (6) at (120:2) {$6$};
\draw (1) -- (2) -- (3) -- (4) -- (5) -- (6) -- (1);
\draw (1) -- (4);
\end{tikzpicture}}}\\
\rule{0pt}{16pt}(a) & (b) 
\end{tabular}
\caption{(a) a $1$\CBC(7,3,6) achieving minimal weight $14$ and (b) its corresponding graph outlined in Theorem \ref{thm:girth}.}
\label{fig:minCBC}
\end{figure}

Given parameters $k$, $m$, and $r$, let $n(k,m;r)$ be the maximum value of $n$ such that $N(n,k,m;r)=(r+1)n$.
If there are no such $r\CBC$s for all $n\geq k$, we say $n(k,m;r)$ does not exist.
Observe that if $k=1$, then any matrix in which each column has cardinality $r+1$ is sufficient, so $n(1,m;r)=\infty$.
We now consider when $k\geq 2$.

Suppose that $r=0$.
As noted before, a CBC$(n,k,m)$ has weight $n$ if and only if $m\geq n$ and each column is distinct.
So $n(k,m;0)$ exists (and equals $m$) if and only if $m\geq n$.

For larger $r$, this family of $r\CBC$s can be quite complex.
In what follows, we give a correspondence between $1\CBC$s with weight $2n$ and graphs on $m$ vertices with appropriate girth.

\begin{theorem}
\label{thm:girth}
Let $2 \leq k < m\leq n$.
Then $A$ is a $1\CBC(n,k,m)$ with weight $2n$ if and only if $A$ is the incidence matrix for a simple graph with $m$ vertices and girth at least $k+1$.
\end{theorem}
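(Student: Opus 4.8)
The plan is to pass through the incidence-matrix description and reduce everything to a clean statement about cycles. First I would pin down the structure forced by the weight. Since $A$ represents a $1\CBC$, Lemma~\ref{lem:minmax} guarantees every column has at least $r+1 = 2$ ones; as the weight $|B_1| + \cdots + |B_n| = 2n$ is exactly twice the number of columns, every column must have precisely two ones. A $0/1$ matrix with exactly two ones in each column is the incidence matrix of a loopless multigraph $G$ on the $m$ rows, with one edge per column and $n$ edges in all. Conversely, the incidence matrix of any simple graph on $m$ vertices with $n$ edges has weight $2n$ automatically. So the theorem reduces to showing that such an incidence matrix satisfies the $1\CBC$ property if and only if $G$ is simple with girth at least $k+1$.

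Next I would invoke Theorem~\ref{thm:grhc}(\ref{grhc1}), which for $r = 1$ says that $A$ is a $1\CBC(n,k,m)$ exactly when every collection of $c$ columns, $1 \leq c \leq k$, spans at least $c+1$ rows. Reading columns as edges and rows as vertices, this is the condition that every set of $c \leq k$ edges of $G$ touches at least $c + 1$ vertices. The heart of the argument is the purely graph-theoretic equivalence: this span condition holds if and only if $G$ is simple and has girth at least $k+1$.

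I would prove the equivalence in both directions. For the forward direction, argue by contraposition: two parallel edges touch only two vertices, violating the $c = 2$ instance (which is available since $k \geq 2$), and a cycle of length $\ell \leq k$ uses $\ell$ edges touching exactly $\ell$ vertices, violating the $c = \ell$ instance; hence the span condition forces simplicity and girth exceeding $k$. For the reverse direction, take any set $S$ of $c \leq k$ edges and let $H$ be the subgraph they form. Any cycle in $H$ uses at most $c \leq k$ edges and so has length at most $k$, contradicting girth at least $k+1$; thus $H$ is a forest. A nonempty forest with $c$ edges and $p \geq 1$ components has $c + p \geq c + 1$ vertices, so $S$ touches at least $c + 1$ vertices, as required.

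The main obstacle is not deep but demands care in the cycle-length bookkeeping: I must use that a subgraph on $c$ edges can only contain cycles of length at most $c$, so that girth at least $k+1$ rules out cycles in every $c \leq k$ edge subset, and then convert acyclicity into the vertex count via the forest inequality. I should also take care to confirm that simplicity is derived rather than assumed---loops are excluded by the exactly-two-ones constraint and parallel edges by the $c = 2$ span condition---so that the phrase ``incidence matrix of a simple graph'' in the statement is genuinely justified.
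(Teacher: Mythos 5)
Your proposal is correct and follows essentially the same route as the paper's proof: both force exactly two ones per column from the weight and Lemma~\ref{lem:minmax}, identify $A$ with a graph incidence matrix, and then establish the equivalence via the forest/component count (a forest with $c$ edges and $a \geq 1$ components has $c + a \geq c+1$ vertices) in one direction and the parallel-edge and short-cycle contradictions in the other. The only cosmetic difference is that you route both directions explicitly through the span condition of Theorem~\ref{thm:grhc}, which the paper uses implicitly.
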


\begin{proof}
Let $G$ be a simple graph with girth at least $k+1$ and let $A$ be its incidence matrix.
Let $e_j$ denote the edge corresponding with column $A_j$.
Observe that $|A_j|=2$ for each $j\in[n]$ and hence $N(A)=2n$.

Let $J$ be a $c$-subset of $[n]$ with $c\leq k$ and $G'$ be the graph induced by the edge set $\{e_j\mid j\in J\}$.
Then $G'$ has no cycles, and therefore $G'$ is a forest.
Hence $G'$ has $a+c$ incident vertices, where $a\geq 1$ is the number of connected components of $G'$.
Therefore $\left |\bigcup_{j\in J}A_j\right|=a+c\geq 1+c$.
So $A$ is a $1\CBC(n,k,m)$.

Suppose $A$ is a $1\CBC(n,k,m)$ for which $N(A)=2n$.
Then $|A_j|=2$ for each $j\in[n]$, and therefore can be interpreted as the incidence matrix of a graph $G$.
If $G$ is not simple, then $G$ has two parallel edges, say which correspond to $A_p$ and $A_q$.
Then $|A_p\cup A_q| = 2 < 3$, which contradicts $A$ being a $1\CBC$.

Assume that $G$ does not have girth at least $k+1$.
Then there exists a cycle in $G$ with $c\leq k$ edges.
Let $J\subseteq[n]$ index the edges of the cycle.
So $\left |\bigcup_{j\in J}A_j\right | = c < 1+c$, a contradiction.
Therefore the graph $G$ has girth at least $k+1$.
\end{proof}

The graph and $1\CBC(7,3,6)$ in Figure \ref{fig:minCBC} correspond to each other.
Note that the graph has girth $4$.

\begin{corollary} For all $m \geq 1$, $n(2,m;1)=\binom m2$ and
$n(3,m;1)=\left\lfloor {m^2}/4\right\rfloor$.
\end{corollary}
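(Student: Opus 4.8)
The plan is to read off both values from the theorem just proved, which identifies the weight-$2n$ instances of a $1\CBC(n,k,m)$ (for $k<m\le n$) with incidence matrices of simple graphs on $m$ vertices having $n$ edges and girth at least $k+1$. First I would observe that $n(k,m;1)$, by its definition together with Corollary~\ref{cor:bounds}, is the largest $n$ for which a $1\CBC(n,k,m)$ of weight $2n$ exists. For $n\le m$, Theorem~\ref{thm:tall} guarantees $N(n,k,m;1)=2n$, so such codes always exist in that range and hence $n(k,m;1)\ge m$. For $n>m$, the preceding theorem translates the existence of a weight-$2n$ code into the existence of a simple graph on $m$ vertices with exactly $n$ edges and girth at least $k+1$. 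Letting $n^\ast$ denote the maximum number of edges of such a graph, the extremal graph yields a weight-$2n^\ast$ code, while for any $n>n^\ast$ no such graph, and therefore no weight-$2n$ code, exists. Since $n^\ast\ge m$ in the ranges below, I conclude $n(k,m;1)=n^\ast$; it then remains only to evaluate $n^\ast$ for $k=2$ and $k=3$.

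For $k=2$, the girth condition requires girth at least $3$, which every simple graph satisfies automatically, since a simple graph has neither loops nor parallel edges. Thus the constraint is vacuous and $n^\ast$ is simply the maximum number of edges in a simple graph on $m$ vertices, namely $\binom{m}{2}$, attained by the complete graph $K_m$. This gives $n(2,m;1)=\binom{m}{2}$.

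For $k=3$, the girth condition requires girth at least $4$, that is, the graph must be triangle-free. Here $n^\ast$ is the maximum number of edges in a triangle-free simple graph on $m$ vertices, which by Mantel's theorem equals $\left\lfloor m^2/4\right\rfloor$, attained by the balanced complete bipartite graph $K_{\lfloor m/2\rfloor,\lceil m/2\rceil}$ (which, being bipartite, contains no triangle). This gives $n(3,m;1)=\left\lfloor m^2/4\right\rfloor$. The only genuine mathematical input is Mantel's theorem; the main care needed is the bookkeeping at the boundary, where I must check that $n^\ast\ge m$ so that the graph-theoretic regime of the preceding theorem actually governs the maximum. This holds for $m\ge 3$ when $k=2$ and for $m\ge 4$ when $k=3$, with the remaining small values of $m$, where $m\le k$, handled directly from the existence conditions of Lemma~\ref{lem:exists}.
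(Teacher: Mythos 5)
Your proposal is correct and follows essentially the same route as the paper: both reduce the computation of $n(k,m;1)$ to an extremal graph problem via the preceding theorem, then invoke the complete graph for girth at least $3$ and Tur\'an's (equivalently, Mantel's) theorem for the triangle-free case. Your additional bookkeeping---using Theorem~\ref{thm:tall} for $n\leq m$, verifying $n^\ast\geq m$ so the graph regime applies, and handling small $m$ via Lemma~\ref{lem:exists}---is care the paper's brief proof omits, but it does not change the underlying argument.
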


\begin{proof}
A graph with $m$ vertices and girth at least 3 is a simple graph, meaning a complete graph maximizes the number of edges.
So $n(2,m;1)=\binom m2$.

A graph with $m$ vertices and girth at least 4 is a triangle-free graph.
By Tur\'{a}n's theorem, a triangle-free graph on $m$ vertices with a maximum number of edges is a complete bipartite graph whose parts are sizes $\left\lfloor m/2\right\rfloor$ and $\left\lceil m/2\right\rceil$.
So $n(3,m;1)=\left\lfloor m/2\right\rfloor\cdot\left\lceil m/2\right\rceil=\left\lfloor{m^2}/4\right\rfloor$.
\end{proof}

The maximum number of edges in a graph on $m$ vertices and girth $k+1\geq 4$ is known only for certain pairs of $m$ and $k$, but no other infinite families are known at this time~\cite{girth}.

\bibliographystyle{amsplain}

\providecommand{\bysame}{\leavevmode\hbox to3em{\hrulefill}\thinspace}
\providecommand{\MR}{\relax\ifhmode\unskip\space\fi MR }
\providecommand{\MRhref}[2]{%
  \href{http://www.ams.org/mathscinet-getitem?mr=#1}{#2}
}
\providecommand{\href}[2]{#2}

\end{document}